\documentclass[11pt,twoside, a4paper]{amsart}
\usepackage[utf8]{inputenc}
\usepackage[T1]{fontenc}
\usepackage{geometry}
\geometry{hmargin=3cm, vmargin=2.41cm}
\setlength\headheight{13.6pt} 
\setlength{\footskip}{13.0pt}
\usepackage{amsmath}
\usepackage{amsfonts}
\usepackage{amssymb}
\usepackage{tikz}
\usepackage{anyfontsize}
\usepackage{graphicx}
\usepackage{rotating}
\usepackage{amsthm}
\usepackage{yfonts}
\usepackage{mathtools}
\usepackage{pdfpages}
\usepackage{appendix}
\usepackage{float}
\usepackage{biblatex}
\addbibresource{biblio.bib}
\usepackage{hyperref}
\usepackage{cleveref}
\hypersetup{colorlinks=true, urlcolor=blue}
\usepackage{lastpage}
\usepackage{fancyhdr}
\pagestyle{fancy}

\fancyhead[L]{}
\fancyhead[R]{}
\fancyhead[C]{}
\fancyhead[CE]{Métras \& Tschanz}
\fancyhead[CO]{Critical length for Steklov eigenvalues on manifolds of revolution}

\usepackage{listings}
\usepackage{xcolor}

\definecolor{codegreen}{rgb}{0,0.6,0}
\definecolor{codegray}{rgb}{0.5,0.5,0.5}
\definecolor{codepurple}{rgb}{0.58,0,0.82}
\definecolor{backcolour}{rgb}{0.95,0.95,0.92}

\lstdefinestyle{mystyle}{
    backgroundcolor=\color{backcolour},   
    commentstyle=\color{codegreen},
    keywordstyle=\color{magenta},
    numberstyle=\tiny\color{codegray},
    stringstyle=\color{codepurple},
    basicstyle=\ttfamily\footnotesize,
    breakatwhitespace=false,         
    breaklines=true,                 
    captionpos=b,                    
    keepspaces=true,                 
    numbers=left,                    
    numbersep=5pt,                  
    showspaces=false,                
    showstringspaces=false,
    showtabs=false,                  
    tabsize=2
}

\lstset{style=mystyle}

\parindent=0cm
\newcommand{\B}{\mathbb{B}}
\newcommand{\N}{\mathbb{N}}
 
\newcommand{\R}{\mathbb{R}}

\renewcommand{\S}{\mathbb{S}}
\renewcommand{\epsilon}{\varepsilon}

\newtheorem{thm}{Theorem}
\newtheorem{prop}[thm]{Proposition}
\newtheorem{corollaire}[thm]{Corollary}
\newtheorem{lemma}[thm]{Lemma}
\theoremstyle{definition}

\newtheorem{defn}[thm]{Definition}
\newtheorem{rem}[thm]{Remark}
\newtheorem{expl}[thm]{Example}

\newtheorem{conj}[thm]{Conjecture}

\title {Critical lengths of Steklov eigenvalues of hypersurfaces of revolution in Euclidean space 
}

\author[M\'etras]{Antoine M\'etras}
\address{University of Bristol,
School of Mathematics,
Fry Building,
Woodland Road,
Bristol, 
BS8 1UG, U.K.}
\email{antoine.metras@bristol.ac.uk}

\author[Tschanz]{Léonard Tschanz}
\address{Institut de Mathémtiques, Université de Neuchâtel, rue Emile-Argand 11, Neuchâtel, Switzerland.}
\email{leonard.tschanz@unine.ch}


\date{}

\begin{document}

\begin{abstract}
    We study the Steklov problem on hypersurfaces of revolution with two  boundary components in Euclidean space. In \cite{T_revolution}, the phenomenon of critical length, at which a Steklov eigenvalue is maximized, was exhibited and multiple questions were raised. 
    In this article, we conjecture that, in any dimension, there is a finite number of infinite critical length. 
    To investigate this, we develop an algorithm to efficiently perform numerical experiments, providing support to our conjecture. 
    Furthermore, we prove the conjecture in dimension $n = 3$ and $n = 4$.
    \medskip
    
    \noindent \textbf{Keywords:} Spectral geometry, Steklov problem, hypersurfaces of revolution, numerical experiments.
    \medskip
    
\end{abstract}

\maketitle

\section{Introduction}

Let $(M, g)$ a smooth compact connected Riemannian manifold  of dimension $n \ge 2$ with smooth boundary $\Sigma$, the Steklov problem on $(M, \Sigma)$ consists in finding the real numbers $\sigma$ and the functions $f : M \longrightarrow \R$ such that 
\begin{align*}
    \left\{
    \begin{array}{c}
    \Delta f = 0 \mbox{ in } M \\
    \partial_\nu f=\sigma f \mbox{ on } \Sigma,
    \end{array}
    \right.
\end{align*}
 where $\nu$ denotes the outward normal on $\Sigma$. Such a $\sigma$ is called a Steklov eigenvalue of $(M, g)$. It is well known that the Steklov spectrum forms a discrete sequence $0= \sigma_0(M,g) < \sigma_1(M,g) \le \sigma_2(M,g) \le \ldots \nearrow \infty$, where each eigenvalue is repeated with its multiplicity, which is finite.
\medskip

Since we are interested in upper bounds for the Steklov eigenvalues of  a family of manifolds, we have to add some geometric constraints on the manifolds. Indeed, \parencite[Theorem 1.1]{CEG2} states that if $(M, g)$ is a compact connected manifold of dimension $n \ge 3$ with boundary $\Sigma$, then there exists a family of Riemannian metrics $(g_\epsilon)$ conformal to $g$ and that agrees with $g$ on $\Sigma$ such that 
\begin{align*}
    \sigma_1(M, g) \longrightarrow \infty \; \mbox{ as } \; \epsilon \longrightarrow 0.
\end{align*}
A class of manifolds that was recently investigated consists of warped products of the form $[0, L] \times \S^{n-1}$, where $L$ is a positive real number and $\S^{n-1}$ is the unit sphere of dimension $n-1$, see for instance \cite{ FTY, FS,   XC, XC2}. Some authors studied the particular case where these warped products manifolds can be seen as hypersurfaces of revolution in Euclidean space, see \cite{CGG, CV, T_revolution}. In this article, we will focus in these hypersurfaces of revolution in Euclidean space.  Let us start by recalling what they are.

\begin{defn} \label{defn : revolution conj}
A $n$-dimensional compact hypersurface of revolution $(M,g)$ in Euclidean space  with two boundary components 
is the warped product $M=[0, L] \times \S^{n-1}$ endowed with the Riemannian metric 
\begin{align*}
    g(r,p) = dr^2 + h^2(r)g_0(p),
\end{align*}
where 
\begin{enumerate}
    \item The metric $g_0$ is the canonical one on  $\S^{n-1}$,
    \item The function $h : [0, L] \longrightarrow \R_+^*$ is  smooth and  satisfies $|h'(r)| \le 1$ for all $r \in [0, L]$.
\end{enumerate}
This assumption on $h$ is a consequence of the fact that the hypersurface lives in Euclidean space, see \parencite[Section $3$]{CGG} for more details.
\medskip

As it was done in \cite{CGG, CV, T_revolution}, in this article we will only consider hypersurfaces of revolution whose boundary components are isometric to a unit $(n-1)$-sphere. Therefore, we will always assume that   $h(0)=h(L)=1$.
\medskip

If $M = [0, L] \times \S^{n-1}$ and $h : [0, L] \longrightarrow \R_+^*$ satisfy the properties above, we say that $M$ is a \textit{hypersurface of revolution}, that  $g(r, p) = dr^2 + h^2(r)g_0(p)$ is a \textit{metric of revolution on $M$ induced by $h$} and we call the number $L$ the \textit{meridian length} of the manifold $M$.
\end{defn}

We refer to \cite{T_revolution} for an overview of what is already known concerning upper and lower bounds for the Steklov spectrum of hypersurfaces of revolution. Here, because we want to investigate \parencite[Question 22]{T_revolution}, we will focus on the upper bounds. As such, the meridian length $L$ plays an important role. Indeed, it was shown in \cite{T_revolution} that when maximizing the $k$-th Steklov eigenvalue, for each given meridian length $L$, a sharp upper bound exists depending only on $L$. The problem is then reduced into studying this upper bound in function of $L$. One can show that for some value of $k$, the global maximum is achieved for some finite number $L_k \in \mathbb{R}_+^*$, while for other $k$, the supremum is achieved at infinity. This motivates the following definitions.

\begin{defn} \label{def : critical length conj}
Let $n \ge 3$ and $k \ge 1$ be integers and $L \in \R_+^*$.
\begin{enumerate}
    \item  We write $B_n^k(L) < \infty$ for the sharp upper bound for the $k$th eigenvalue of a hypersurface of revolution of dimension $n$ and meridian length $L$.
    \item We write $B_n^k := \sup_{L \in \R_+^*} \{B_n^k(L)\}$.
    \item We say that $L_k \in \R_+^*$ is a finite critical length associated with $k$, and that $k$ has a finite critical length, if we have $B_n^k = B_n^k(L_k)$. 
    \item We say that $k$ has a critical length at infinity, or an infinite critical length, if it satisfies $B_n^k= \lim_{L\to \infty} B_n^k(L)$.
\end{enumerate}
\end{defn}

When we look at the literature, we can see that the phenomenon of finite/infinite critical lengths is intriguing. Indeed, 
\begin{enumerate}
    \item In dimension $n=2$:
    \begin{enumerate}
        \item All surfaces of revolution of $\mathbb{E}^3$ with one boundary component isometric to the unit circle are isospectral to the unit disk \parencite[Proposition 1.10]{CGG}. Therefore, for any fixed $k$, all meridian lengths $L$ are critical for $k$ in the sense that all of them achieve the maximum;
        \item In the case of surfaces of revolution with two boundary components, \parencite[Theorem 1.1]{FTY} states that $k = 2$ has an infinite critical length while every other eigenvalues have a finite critical length.
    \end{enumerate}
    \item In dimension $n \ge 3$:
    \begin{enumerate}
        \item If $(M, g)$ is a hypersurface of revolution of the Euclidean space with one boundary component isometric to the unit sphere, then for all $k \ge 1$, the $k$th eigenvalue has an infinite critical length, see \parencite[Proof of Theorem 1]{CV};
        \item If $(M, g)$ is a hypersurface of revolution of the Euclidean space with two boundary components each isometric to the unit sphere, the set of eigenvalues which have a finite critical length is non empty. Indeed, \parencite[Corollary 4]{T_revolution} guarantees that $k=1$ has a finite critical length.  Moreover, the set of eigenvalues which have a critical length at infinity is also non empty. Indeed, in  \parencite[Section 6.1]{T_revolution} it was shown that $k=2$ has an infinite critical length.
    \end{enumerate}
\end{enumerate}
This consideration naturally calls for further research on finite/infinite critical lengths in the case $n \geq 3$ and with two boundary components.
\medskip

We state here \parencite[Theorem 9]{T_revolution}:
\begin{thm} \label{thm : principal principal 4}
Let $n \ge 3$. Then there exist infinitely many $k \in \N$ which have a finite critical length associated with them. 
Moreover, if we call  $(k_i)_{i=1}^\infty \subset \N$ the increasing sequence of such $k$ and if we call $(L_i)_{i=1}^\infty$ the associated sequence of finite critical lengths, then we have 
\begin{align*}
    \lim_{i\to \infty }L_i=0.
\end{align*}
\end{thm}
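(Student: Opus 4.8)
The plan is to understand the sharp upper bound $B_n^k(L)$ explicitly enough to extract its behaviour as $L \to 0$. Following the analysis of \cite{T_revolution}, the Steklov spectrum of a hypersurface of revolution decomposes over the eigenspaces of the Laplacian on $\S^{n-1}$: for each $\ell \ge 0$ with eigenvalue $\ell(\ell+n-2)$ one obtains a one-dimensional Steklov-type problem on $[0,L]$ whose eigenvalues, optimized over admissible warping functions $h$, give a contribution; the sharp upper bound for $\sigma_k$ is then obtained by collecting these contributions with the correct multiplicities $m_\ell = \binom{n+\ell-1}{\ell} - \binom{n+\ell-3}{\ell-2}$ (using $|h'| \le 1$, $h(0)=h(L)=1$). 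I would first isolate, from \parencite[Theorem 9]{T_revolution} and its proof, the precise formula or estimate for $B_n^k(L)$ and for the function $B_n^k(L)$'s limit as $L \to \infty$ and as $L \to 0$. The key structural fact to exploit is that as $L \to 0$ the manifold degenerates to (two copies of) a cylinder of vanishing length capped by spheres, so that the optimal configuration for high "angular frequency" $\ell$ becomes available only on short meridians — this is the mechanism forcing $L_i \to 0$.

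The key steps, in order, would be: (i) Recall/restate the reduction of the optimization problem to the family of ODE problems indexed by $\ell$, and the monotonicity and scaling properties of the resulting quantities in $L$. (ii) Show that for each sufficiently large angular frequency $\ell$, the function $L \mapsto B_n^k(L)$ restricted to the relevant eigenvalue branch is maximized at a finite value $L(\ell)$, and that $L(\ell) \to 0$ as $\ell \to \infty$; here one uses that a large $\ell$-contribution can only dominate the $k$-th eigenvalue when the "lower-frequency" branches have not yet accumulated too many eigenvalues, which forces the meridian to be short. (iii) Identify an infinite increasing sequence $k_i$ — essentially the values of $k$ at which a new high-frequency branch first enters the counting function below a competitor — for which the global maximum $B_n^{k_i}$ is attained at the finite length $L_i := L(\ell_i)$, and conclude $L_i \to 0$. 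Step (iii) requires checking that the supremum over $L$ is genuinely attained (not escaping to $L=\infty$), which is where the comparison with $\lim_{L\to\infty} B_n^k(L)$ enters: one must verify $B_n^{k_i}(L_i) > \lim_{L\to\infty} B_n^{k_i}(L)$.

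The main obstacle I expect is Step (ii)–(iii): controlling the interleaving of the infinitely many eigenvalue branches uniformly enough to guarantee both that infinitely many $k$ pick up their maximum from a finite-$L$ high-frequency branch \emph{and} that the associated critical lengths shrink to $0$. Concretely, one needs quantitative two-sided estimates on the $\ell$-branch Steklov eigenvalues as functions of $L$ (their values at small $L$, their decay as $L \to \infty$, and the location of their maxima), precise enough to run a counting argument on $\sigma_k$. A secondary technical point is ensuring the supremum defining $B_n^k$ is attained rather than approached at infinity for the chosen subsequence; this should follow from the explicit asymptotics of $B_n^k(L)$ as $L \to \infty$ established in \cite{T_revolution}, combined with the observation that the finite-$L$ maximum of a high-frequency branch strictly exceeds that limiting value.
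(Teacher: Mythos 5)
The statement you were asked to prove is not actually proven in this paper: it is quoted verbatim from \parencite[Theorem 9]{T_revolution}, and the paper only develops adjacent machinery --- the extension process in Section~\ref{subsect : ext process} and the crossing lemmas in Section~\ref{sect : low dimension} --- which it then uses for the stronger Theorem~\ref{thm : low dimension}. There is therefore no internal proof to compare against; I am judging your outline against that machinery.

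Your outline correctly identifies the relevant objects (angular decomposition, the multiplicities $m_\ell$, interleaving of eigenvalue branches, and the need to compare the supremum with the $L\to\infty$ limit), but it stays at the level of a plan with no carried-out counting, and two of its heuristics misdescribe the actual mechanism. First, you ask for ``quantitative two-sided estimates on the $\ell$-branch eigenvalues \dots\ and the location of their maxima.'' The relevant branches are in closed form: the Steklov--Dirichlet curves $\sigma_{(j)}^D(A_{1+L/2})$ (strictly decreasing, $\infty\to j+n-2$) and Steklov--Neumann curves $\sigma_{(j)}^N(A_{1+L/2})$ (strictly increasing, $0\to j+n-2$) of Propositions~\ref{prop : SD conj} and~\ref{prop : SN conj}. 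No branch has an interior maximum; a local maximum of $B_n^k(L)$ occurs only at a crossing of a Dirichlet curve with a Neumann curve, where the piecewise bound switches from an increasing branch to a decreasing one. Second, the heuristic that ``as $L\to 0$ the manifold degenerates \dots\ so that the optimal configuration for high angular frequency becomes available only on short meridians'' is not what forces $L_i\to 0$: in fact $B_n^k(L)\to 0$ as $L\to 0$ for each fixed $k$ (Corollary~\ref{cor : deuxieme cor du principal 3}), so nothing ``optimal'' survives near $L=0$. What shrinks the critical length is that the fixed curve $\sigma_{(0)}^D(A_{1+L/2})$ blows up as $L\to 0$, while the location $L^*_j$ of its crossing with $\sigma_{(j)}^N(A_{1+L/2})$ moves to $0$ as $j\to\infty$, the crossing value $b_j$ satisfying $b_j\ge c j$ for any $c<c_0\approx 0.83$ (Lemma~\ref{lem : tech 1}).

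The genuine gap is precisely the step you defer to ``secondary technical points'': exhibiting an infinite sequence of $k$'s for which the peak value at a finite crossing strictly exceeds $\lim_{L\to\infty}B_n^k(L)$, and checking that the associated crossing locations tend to $0$. This is a delicate multiplicity-count because the $L\to\infty$ limit of $B_n^k(L)$ also grows with $k$. The naive choice $k=\sum_{l\le j}m_l$ paired with the $\sigma_{(0)}^D$-crossing only beats the limit at infinity when $c_0>2^{-1/(n-1)}$, i.e.\ $n\in\{3,4\}$ (see the Remark after Lemma~\ref{lem : tech 2}); the argument in \cite{T_revolution}, valid for every $n\ge3$ but only needing a subsequence, must therefore select the $k$'s and/or the crossings more carefully. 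Flagging this comparison as ``the main obstacle'' is accurate, but without executing it your proposal does not establish the theorem.
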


This result immediately raises the following open question (\parencite[Question 22]{T_revolution}):

\begin{center}
    \textit{Given $n \ge 3$, are there finitely or infinitely many $k \in \N$ such that $k$ has a critical length at infinity?} 
\end{center}

To investigate this question, we developed the following tool:

\begin{thm} \label{thm : thm principal 3}
Let $(M=[0, L] \times \S^{n-1}, g)$ be a hypersurface of revolution  in Euclidean space with two boundary components each isometric to  $\S^{n-1}$, with $n \ge 3$, and let $k \ge 1$. Then there exists an algorithm, called  \emph{extension process}, {that computes the sharp upper bound $B_n^k(L)$, depending only on $n, k$ and $L$, and shows that the upper bound is never achieved :
\begin{align*}
    \sigma_{k}(M,g) < B_n^k(L).
\end{align*}
Moreover, for any $\epsilon >0$, we construct a metric of revolution $g_\epsilon$ on $M$ such that $\sigma_k(M,g_\epsilon) > B_n^k(L)-\epsilon$}.

\end{thm}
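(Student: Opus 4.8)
The plan is to reduce the Steklov problem on a hypersurface of revolution to a family of one-dimensional boundary-value problems by separating variables. Writing $f(r,p) = u(r)\phi(p)$ where $\phi$ is a spherical harmonic on $\S^{n-1}$ with eigenvalue $\lambda_\ell = \ell(\ell+n-2)$, the harmonic equation $\Delta f = 0$ becomes an ODE of the form $\big(h^{n-1}(r)u'(r)\big)' = \lambda_\ell h^{n-3}(r) u(r)$ on $[0,L]$. The Steklov condition $\partial_\nu f = \sigma f$ at the two boundary components (where $h=1$) couples the boundary values and derivatives of $u$. Because the boundary components are both unit spheres, for each fixed $\ell$ the eigenvalue $\sigma$ solving this mixed problem depends only on $\ell$, $n$, $L$ and the profile $h$; the full Steklov spectrum is the union over $\ell \ge 0$ of these ``$\ell$-th branches'' with multiplicities given by $\dim$ of the space of degree-$\ell$ spherical harmonics.

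Next I would analyze, for each $\ell$, the supremum over admissible profiles $h$ (smooth, positive, $|h'|\le 1$, $h(0)=h(L)=1$) of the relevant eigenvalue, using the variational (Rayleigh quotient) characterization. The constraint $|h'|\le 1$ is exactly what prevents collapsing and forces a genuine bound; the key computation is to identify the extremal or near-extremal profile for each $(\ell, L)$ — typically a profile that is ``as thin as possible'' in the interior, i.e.\ piecewise of the form $h(r)=\max(1-r, \text{const})$ type near the ends or a cosh/exponential-type shape dictated by the ODE. This yields an explicit formula or implicit characterization for $B_n^k(L)$: one orders the $\ell$-branch suprema, notes that the $k$-th Steklov eigenvalue is maximized by pushing the appropriate branch (or a multiplicity-induced combination) to its supremum, and checks that the resulting value is a sharp upper bound. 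The ``extension process'' algorithm is then the procedure that, given $n,k,L$, determines which $\ell$ and which branch govern $\sigma_k$, and computes the corresponding supremum — the name presumably refers to extending a candidate profile step by step (on subintervals) while respecting $|h'|\le 1$ and the boundary normalization.

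For the strict inequality $\sigma_k(M,g) < B_n^k(L)$, I would argue that the supremum is not attained within the admissible class: any profile achieving equality would have to saturate $|h'|\le 1$ on a set of positive measure and simultaneously be smooth, which is incompatible — the extremal configuration is only a limit of smooth profiles, not itself one (a standard phenomenon, analogous to the non-attainment results already present for the one-boundary-component case and the $n=2$ two-boundary case). More concretely, one shows that if $\sigma_k(M,g)=B_n^k(L)$ for some smooth $h$, then the corresponding eigenfunction's ODE analysis forces $h$ to coincide with the non-smooth extremal profile on an interval, a contradiction. For the approximation statement, given $\epsilon>0$ one takes the non-smooth extremal profile, which by construction gives eigenvalue exactly (or arbitrarily close to) $B_n^k(L)$, and mollifies/perturbs it to a smooth admissible $h_\epsilon$ with $|h_\epsilon'|\le 1$ and $h_\epsilon(0)=h_\epsilon(L)=1$; continuity of $\sigma_k$ with respect to the profile in a suitable topology then gives $\sigma_k(M,g_\epsilon) > B_n^k(L) - \epsilon$.

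The main obstacle I expect is the sharp identification of $B_n^k(L)$ and the associated extremal profile for every $(\ell,L)$, together with verifying that the ``extension process'' terminates and correctly captures the right branch: one must control how the ordering of the $\ell$-branch suprema changes with $L$ (branch crossings are exactly what produce finite vs.\ infinite critical lengths), handle the multiplicities coming from spherical harmonics, and make sure no branch is overlooked. The non-attainment and approximation parts are then comparatively routine given a clean description of the extremal profile, relying on the rigidity of the $|h'|\le 1$ constraint and standard perturbation/continuity arguments for Steklov eigenvalues.
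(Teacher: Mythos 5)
Your sketch captures the right general intuition (separate variables, the constraint $|h'|\le 1$ with $h(0)=h(L)=1$ pins $h(r)\le 1+\min(r,L-r)$, the extremal profile is non-smooth, and one should mollify it to prove sharpness), but it misses the central mechanism of the paper's proof, and without it the plan is not an algorithm. The paper does \emph{not} proceed branch-by-branch by ``identifying the extremal profile for each $(\ell,L)$'' and solving the resulting ODE; that would be quite delicate and does not obviously produce an explicit $B_n^k(L)$. Instead, the key idea is to pass through the mixed Steklov--Dirichlet and Steklov--Neumann problems on the flat Euclidean annulus $A_{1+L/2}$, whose eigenvalues $\sigma_{(i)}^D(A_{1+L/2})$ and $\sigma_{(i)}^N(A_{1+L/2})$ have closed-form expressions (Propositions~\ref{prop : SD conj} and~\ref{prop : SN conj}). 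Their eigenfunctions on $[0,L/2]\times\S^{n-1}$ (with profile $1+r$) are extended to $[0,L]\times\S^{n-1}$ by odd reflection (Dirichlet case) or even reflection (Neumann case); this is precisely the ``extension process,'' a name you misinterpreted as referring to building the profile $h$ step by step. These $k_1+1\ge k+1$ pairwise-orthogonal extended functions are plugged into the min--max characterization~\eqref{form : min max}. The comparison of Rayleigh quotients uses a monotonicity-in-the-profile argument (enlarging $h$ to a $\tilde h$ bounded by $1+r$), and strictness of the inequality comes not from an ODE rigidity argument but simply from the observation that any admissible smooth $h$ satisfies $\max h < 1 + L/2$, so $\tilde h < 1+r$ on a set of positive measure and the Rayleigh quotient on the straight-cone profile is strictly larger.

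The sharpness half of your proposal (mollify the extremal profile and invoke continuity of $\sigma_k$) also differs from the paper in a way that matters: the paper does not invoke spectral continuity in a profile topology. Instead it chooses $h_\epsilon$ symmetric with $h_\epsilon(r)=1+r$ on $[0,L/2-\delta]$, takes a genuine Steklov eigenfunction $f_k$ of $(M,g_\epsilon)$ (which by symmetry can be taken symmetric or anti-symmetric, hence restricts on $[0,L/2]\times\S^{n-1}$ to a valid Steklov--Neumann or Steklov--Dirichlet test function on $A_{1+L/2}$), and compares its Rayleigh quotient on the annulus to $\sigma_k(M,g_\epsilon)$ directly, getting $B_n^k(L) < R_{A_{1+L/2}}(f_k) < \sigma_k(M,g_\epsilon)+\epsilon$. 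In short: the concrete link to the annulus mixed problems, the reflection construction of test functions, the explicit formulas for $\sigma_{(i)}^{D/N}$, and the bookkeeping of multiplicities from spherical harmonics are all essential to turn your outline into the stated algorithm, and none of them appear in your proposal.
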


Using this result, we obtain two corollaries:

\begin{corollaire} \label{cor : du principal 3}
Let $n \ge 3$ and $k \ge 1$. Then there exists a bound $B_n^k < \infty$ such that for all hypersurfaces of revolution $(M, g)$  in Euclidean space with two boundary components each isometric to  $\S^{n-1}$, we have
\begin{align*}
    \sigma_k(M,g) < B_n^k, 
\end{align*}
given by 
\begin{align*}
    B_n^k := \sup \{B_n^k(L) : L \in \R_+^* \}.
\end{align*}
Moreover, this bound is sharp: for each $\epsilon >0$, there exists a  hypersurface of revolution $(M_\epsilon, g_\epsilon)$ such that $\sigma_k(M_\epsilon, g_\epsilon) > B_n^k- \epsilon$.
\end{corollaire}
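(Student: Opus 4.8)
The plan is to deduce the statement from Theorem~\ref{thm : thm principal 3}, the only work beyond it being to pass from the family of $L$-dependent bounds $B_n^k(L)$ to a single finite bound, while keeping track of the strict inequality and of the near-maximizers.

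Fix $n\ge3$ and $k\ge1$, and let $(M,g)$ be an arbitrary hypersurface of revolution in Euclidean space with two boundary components each isometric to $\S^{n-1}$; write $L$ for its meridian length, so $M=[0,L]\times\S^{n-1}$. By Theorem~\ref{thm : thm principal 3} one has the \emph{strict} inequality $\sigma_k(M,g)<B_n^k(L)$, and from the definition $B_n^k:=\sup_{L'\in\R_+^*}B_n^k(L')$ one has $B_n^k(L)\le B_n^k$; hence $\sigma_k(M,g)<B_n^k$ for every admissible $(M,g)$, provided $B_n^k<\infty$. This finiteness is part of the picture recalled after Definition~\ref{def : critical length conj} — it is what makes the notions of finite/infinite critical length meaningful — and is established in \cite{T_revolution}; I would also verify it directly, separating two regimes of $L$. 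For bounded $L$: since $h(0)=h(L)=1$ and $|h'|\le1$ force $h\le1+L$ on $[0,L]$, take the space $W$ of functions on $M$ which are pullbacks of spherical harmonics on $\S^{n-1}$ of degree $\le m$ (hence independent of the $[0,L]$-variable), with $m=m(n,k)$ minimal such that $\dim W\ge k+1$; a direct computation gives, for every $0\ne f\in W$,
\[
\frac{\int_M|\nabla f|^2}{\int_{\partial M}f^2}\;\le\;\frac{\lambda_m}{2}\int_0^L h^{n-3}\,dr\;\le\;C(n,k)\,L\,(1+L)^{n-3},\qquad\lambda_m=m(m+n-2),
\]
so the min-max principle yields $\sigma_k(M,g)\le C(n,k)\,L\,(1+L)^{n-3}$ for every $L>0$, uniformly in $h$; in particular $B_n^k(L)$ is bounded on every bounded range of $L$. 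For large $L$ this estimate is no longer useful (its right-hand side grows with $L$), and there one invokes the explicit description of $B_n^k(L)$ furnished by the extension process of Theorem~\ref{thm : thm principal 3} to conclude that $\limsup_{L\to\infty}B_n^k(L)<\infty$ (in fact $\lim_{L\to\infty}B_n^k(L)$ exists, as is implicit in the discussion of infinite critical lengths). Combining the two regimes gives $B_n^k=\sup_LB_n^k(L)<\infty$.

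For sharpness, fix $\epsilon>0$ and, using the definition of the supremum, choose $L_\epsilon\in\R_+^*$ with $B_n^k(L_\epsilon)>B_n^k-\tfrac{\epsilon}{2}$. Applying the last assertion of Theorem~\ref{thm : thm principal 3} on $M_\epsilon:=[0,L_\epsilon]\times\S^{n-1}$ with $\tfrac{\epsilon}{2}$ in place of $\epsilon$ produces a metric of revolution $g_\epsilon$ on $M_\epsilon$ with
\[
\sigma_k(M_\epsilon,g_\epsilon)>B_n^k(L_\epsilon)-\tfrac{\epsilon}{2}>B_n^k-\epsilon,
\]
which is the required hypersurface of revolution.

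The only point demanding real care is the finiteness $B_n^k<\infty$, i.e.\ the uniform control of $B_n^k(L)$ in the meridian length: the small-$L$ range is handled by the elementary test-function estimate above, but for large $L$ one genuinely relies on the structural output of the extension process to see that $B_n^k(L)$ stays bounded (indeed converges). Everything else is bookkeeping around Theorem~\ref{thm : thm principal 3}.
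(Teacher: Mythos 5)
Your overall structure is sound, and the two easy parts — deducing $\sigma_k(M,g)<B_n^k$ from Theorem~\ref{thm : thm principal 3} together with the definition of the supremum, and obtaining the sharpness by choosing $L_\epsilon$ with $B_n^k(L_\epsilon)>B_n^k-\epsilon/2$ and then applying the last assertion of Theorem~\ref{thm : thm principal 3} with $\epsilon/2$ — are exactly the paper's argument. The small-$L$ test-function estimate you supply (pullbacks of low-degree spherical harmonics, giving $B_n^k(L)\le \frac{\lambda_m}{2}L(1+L)^{n-3}$) is a correct, self-contained bound, though it is not needed in the paper's proof.

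The genuine gap is the finiteness of $B_n^k$ for large $L$, which you correctly identify as the crux but then do not prove. Appealing to the ``discussion of infinite critical lengths'' is circular — that discussion presupposes $B_n^k<\infty$ — and ``invoking the explicit description'' without extracting the actual bound leaves the key step missing. The paper closes this in one line, \emph{uniformly in $L$}, using the structure of the extension process: since $l_0\le k-1$ and the Steklov--Neumann eigenvalues $\sigma^N_{(1)}(A_{1+L/2}),\dots,\sigma^N_{(l_0+1)}(A_{1+L/2})$ alone carry total multiplicity $m_1+\dots+m_{l_0+1}\ge k$, the selected value $\nu_{l_1}=B_n^k(L)$ is always at most $\sigma^N_{(l_0+1)}(A_{1+L/2})\le\sigma^N_{(k)}(A_{1+L/2})$, and since $L\mapsto\sigma^N_{(k)}(A_{1+L/2})$ is increasing with $\lim_{L\to\infty}\sigma^N_{(k)}(A_{1+L/2})=n+k-2$, one gets $B_n^k\le n+k-2<\infty$ directly, making your separation into bounded and large $L$ regimes unnecessary. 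If you want to keep your two-regime route, you must actually carry out the large-$L$ analysis rather than defer it.
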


\begin{corollaire} \label{cor : deuxieme cor du principal 3}
Let $(M_i, g_i)_{i=1}^\infty$ be a family of hypersurfaces of revolution  in Euclidean space with two boundary components each isometric to  $\S^{n-1}$, where $M_i=[0, L_i]\times \S^{n-1}$, $n\ge 3$. Let $k \in \N$. Let us suppose that $L_i \underset{i \to \infty}{\longrightarrow} 0$. Then we have 
\begin{align*}
    \sigma_k(M_i, g_i) \underset{i \to \infty}{\longrightarrow} 0.
\end{align*}
\end{corollaire}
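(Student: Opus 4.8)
The plan is to deduce Corollary \ref{cor : deuxieme cor du principal 3} directly from Corollary \ref{cor : du principal 3} together with the monotonicity of the sharp bound $B_n^k(L)$ near $L=0$, or—more robustly—from the relationship between $B_n^k(L)$ and $L$ that underlies Theorem \ref{thm : principal principal 4}. The key observation is that for each fixed $k$, the quantity $\sigma_k(M_i,g_i)$ is controlled by $B_n^k(L_i)$ thanks to Theorem \ref{thm : thm principal 3}, so it suffices to show that $B_n^k(L) \to 0$ as $L \to 0$. First I would recall that $B_n^k(L)$ is the sharp upper bound for the $k$th Steklov eigenvalue over all hypersurfaces of revolution of dimension $n$ with meridian length $L$ and boundary components isometric to $\S^{n-1}$, so the whole problem reduces to a statement purely about the function $L \mapsto B_n^k(L)$.

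Next I would establish the limit $\lim_{L\to 0} B_n^k(L) = 0$. The natural route is a test-function / direct estimate argument: when the meridian length $L$ is small, the manifold $M = [0,L]\times \S^{n-1}$ with $h(0)=h(L)=1$ and $|h'|\le 1$ is a thin collar, and one can bound the Rayleigh quotient governing $\sigma_k$ from below in a way that forces all eigenvalues up to index $k$ to be large — equivalently, by duality, any harmonic function with prescribed boundary data has large Dirichlet energy relative to its boundary $L^2$-norm when the collar is thin. Concretely, using the separation of variables on the warped product and the explicit description of the Steklov eigenvalues in terms of the one-dimensional problems indexed by the spherical harmonics $Y_\ell$ on $\S^{n-1}$ (with eigenvalues $\ell(\ell+n-2)$), one shows that for the constant mode the Steklov eigenvalue behaves like $2/L + O(1)$ as $L\to 0$ (this is the disk-like collar computation), and for the non-constant modes it is even larger. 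Hence only finitely many eigenvalues — in fact a number growing like $1/L$ — can stay bounded, and for fixed $k$ one gets $\sigma_k \to \infty$ rather than $\to 0$ in the lower-bound direction; but the statement we want is an \emph{upper} bound going to zero, which instead comes from the constraint $h \ge$ small forced by $|h'|\le 1$ together with the boundary conditions, making $h$ uniformly close to $1$ and the whole manifold close to a degenerate cylinder of vanishing length. I would therefore argue: since $|h'|\le 1$ and $h(0)=1$, we have $1-r \le h(r) \le 1+r$ on $[0,L]$, so as $L\to 0$ the metric $g$ converges (in a controlled $C^0$ sense, uniformly) to the product metric $dr^2 + g_0$ on $[0,L]\times\S^{n-1}$, and the Steklov spectrum of such a short cylinder over $\S^{n-1}$ has $\sigma_k \to 0$ for every fixed $k$ because the cylinder collapses to $\S^{n-1}$ whose (closed) spectrum is discrete and the relevant Dirichlet-to-Neumann operator degenerates to $0$ on low modes.

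I would then make this precise using the variational characterization: $\sigma_k(M,g) = \min_{E} \max_{0\ne f \in E} \frac{\int_M |\nabla f|^2\, dV_g}{\int_\Sigma f^2\, dA_g}$ over $(k+1)$-dimensional subspaces $E$ of $H^1(M)$. Choosing $E$ spanned by the first $k+1$ spherical harmonics on $\S^{n-1}$ pulled back to $M$ via projection (these are constant in $r$), the numerator is $\int_0^L h^{n-3}(r)\,dr \cdot \int_{\S^{n-1}} |\nabla_{\S^{n-1}} f|^2\, dV_{g_0}$, which is $O(L)$, while the denominator $\int_\Sigma f^2 = (h^{n-1}(0)+h^{n-1}(L))\int_{\S^{n-1}} f^2$ is bounded below by a positive constant independent of $L$; thus $\sigma_k(M,g) \le C(n,k)\cdot L \to 0$ uniformly over all admissible $h$, which gives $B_n^k(L) \le C(n,k) L$ and hence both this corollary and (by taking suprema more carefully away from $L=0$, using Corollary \ref{cor : du principal 3}) closes the argument. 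Applying this with $L=L_i\to 0$ yields $\sigma_k(M_i,g_i) < B_n^k(L_i) \le C(n,k) L_i \to 0$.

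The main obstacle I anticipate is not the asymptotic computation itself but ensuring the estimate is \emph{uniform over all admissible warping functions $h$} with the given constraints — i.e. that the constant $C(n,k)$ depends only on $n$ and $k$ and not on $h$. This is where the constraints $|h'|\le 1$ and $h(0)=h(L)=1$ are essential: they pin $h$ between $1-L$ and $1+L$, so for $L \le 1/2$ say, $h$ is uniformly bounded above and below by explicit constants, making all the integrals $\int_0^L h^{n-3}\,dr$ and the boundary terms $h^{n-1}(0), h^{n-1}(L)$ uniformly controlled. Once this uniformity is in hand the rest is the routine test-function estimate above; care is only needed with the exponent $n-3$ (which is $0$ when $n=3$ and negative is impossible here since $n\ge 3$, so $h^{n-3}$ stays bounded) and with confirming that the chosen subspace $E$ indeed has dimension $k+1$, which holds as soon as we take enough spherical harmonics on $\S^{n-1}$.
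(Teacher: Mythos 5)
Your final argument (the test-function estimate in the third and fourth paragraphs) is correct, and it is a genuinely different route from the one the paper takes. The paper deduces the corollary from Theorem~\ref{thm : thm principal 3}: it observes that as $L\to 0$ the Steklov--Dirichlet eigenvalues $\sigma^D_{(j)}(A_{1+L/2})$ blow up while the Steklov--Neumann eigenvalues $\sigma^N_{(j)}(A_{1+L/2})$ tend to $0$, so for small $L$ the ordered list $(\nu_0,\dots,\nu_{2l_0+1})$ begins with Neumann values and hence $B_n^k(L)=\nu_{l_1}$ lies among them and is $<\epsilon$. Your route bypasses the annulus/extension machinery entirely: you take the $(k+1)$-dimensional subspace of $H^1(M)$ spanned by the first $k+1$ spherical harmonics extended constantly in $r$, and compute the Rayleigh quotient directly on the warped product. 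Using the pinching $1-L/2\le h(r)\le 1+L/2$ forced by $|h'|\le 1$ and $h(0)=h(L)=1$, you get $\int_0^L h^{n-3}\,dr = O(L)$ with a constant depending only on $n$ (uniform over admissible $h$), while the boundary term is exactly $2\int_{\S^{n-1}}f^2\,dV_{g_0}$, giving $\sigma_k(M,g)\le C(n,k)\,L$. This is more elementary, does not require Theorem~\ref{thm : thm principal 3}, and even yields a quantitative rate (as the paper hints, this is essentially the argument in Proposition~3.3 of \cite{CGG}).

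One caution about your second paragraph: the heuristic there is partly incorrect. You assert that for a thin cylinder the constant spherical mode gives a Steklov eigenvalue $\sim 2/L$ and that the nonconstant modes are even larger, concluding that ``for fixed $k$ one gets $\sigma_k\to\infty$.'' In fact, by the symmetric/antisymmetric splitting on a short cylinder, each spherical mode $\ell\ge 1$ contributes \emph{two} Steklov eigenvalues: a symmetric one $\approx \sqrt{\lambda_\ell}\tanh\bigl(\sqrt{\lambda_\ell}\,L/2\bigr)\to 0$ and an antisymmetric one $\approx \sqrt{\lambda_\ell}\coth\bigl(\sqrt{\lambda_\ell}\,L/2\bigr)\to\infty$. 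The low-index eigenvalues $\sigma_k$ are the symmetric ones and tend to $0$, not $\infty$. This does not affect your proof, since the paragraph is only motivational and you discard it before giving the correct test-function argument, but the claim as written contradicts the very statement being proved and should be removed or corrected.
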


\begin{rem}
Corollary \ref{cor : deuxieme cor du principal 3} is already contained in Proposition 3.3 of \cite{CGG}.
\end{rem}

In this paper we do not answer the open question completely (we do so in the case of dimension $n=3$ and $n=4$) but we perform some numerical experiments to clarify the phenomenon of finite / infinite critical lengths. Here is an overview of the experiments results:
\begin{align*}
    \begin{array}{cl}
     n  & \mbox{Result of the program investigations}  \\
     \hline
     3  & \mbox{From $k=18$ to $k=45'601$, only finite critical lengths found.}\\
     4 & \mbox{From $k=408$ to $k=47'641$, only finite critical lengths found.} \\
     5 & \mbox{From $k=8'400$ to $k=195'423$, only finite critical lengths found.} \\
     6 & \mbox{From $k=21'112$ to $k=610'973$, only finite critical lengths found.}
\end{array}
\end{align*}

Therefore, we propose the following conjecture:
\begin{conj} \label{conj : critical length}
    Let $n \ge 2$ be an integer. Then there exists a constant $K=K(n) \in \N$ such that for every $k \ge K$, the $k$th eigenvalue has an associated finite critical length.
\end{conj}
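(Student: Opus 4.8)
The plan is to understand the function $L \mapsto B_n^k(L)$ at its two extremes and to show that, for $k$ large, its supremum over $L$ is attained strictly before the limit at infinity. By Corollary \ref{cor : deuxieme cor du principal 3} (equivalently \parencite[Proposition 3.3]{CGG}) one has $B_n^k(L) \to 0$ as $L \to 0$, and $L \mapsto B_n^k(L)$ is continuous; so, writing $\beta_n^k := \lim_{L \to \infty} B_n^k(L)$ (whose existence has to be established first), the integer $k$ has a finite critical length as soon as $\sup_{L \in \R_+^*} B_n^k(L) > \beta_n^k$, and an infinite one precisely when the supremum equals $\beta_n^k$. The whole question thus splits into two quantitative tasks: compute $\beta_n^k$, and exhibit a hypersurface of revolution of finite meridian length whose $k$th eigenvalue beats it.

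To compute $\beta_n^k$ I would use the sector decomposition underlying the extension process of Theorem \ref{thm : thm principal 3}: separating variables by spherical harmonics, the Steklov spectrum is the union over degrees $l \ge 0$ of the two eigenvalues of a $2 \times 2$ Dirichlet-to-Neumann matrix attached to the warped radial ODE on $[0,L]$, each counted with multiplicity $\dim \mathcal H_l(\S^{n-1})$. Since $|h'| \le 1$, the fattest admissible profiles are the cones $h(r) = 1 + \min(r, L-r)$, and as $L \to \infty$ the sector-$l$ equation becomes an Euler equation on a half-infinite cone whose decaying solution has decay rate $l+n-2$; accordingly the sharp sector-$l$ eigenvalues converge to $l+n-2$ (from below or from above according to the parity of the eigenfunction across the waist). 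This identifies $\beta_n^k$ as the $k$th element of the multiset consisting of the sector-$0$ value $n-2$ together with $l+n-2$ with multiplicity $2\dim\mathcal H_l(\S^{n-1})$ for each $l \ge 1$; combined with the count $\sum_{j \le l}\dim\mathcal H_j(\S^{n-1}) \sim \tfrac{2}{(n-1)!}\, l^{n-1}$ this gives $\beta_n^k \sim c_n\, k^{1/(n-1)}$ and, crucially, shows that the degree $l(k)$ governing the $k$th eigenvalue satisfies $l(k) \asymp k^{1/(n-1)} \to \infty$.

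For the reverse inequality I would construct an explicit admissible profile on an interval of carefully chosen finite length $L_0 = L_0(k)$ — a single symmetric bump whose height and width are tuned to the scale $1/l(k)$ of the governing sector, subject to $|h'| \le 1$ — and bound its $k$th eigenvalue from below, again sector by sector via the estimates that the extension process provides on the warped Dirichlet-to-Neumann matrix. The gain one hopes to extract is the ``overshoot'' already present for $k=1$ in \parencite[Corollary 4]{T_revolution}: at a finite meridian length the antisymmetric (and the sector-$0$) eigenvalues of a fat profile lie strictly above their $L \to \infty$ limits, so for the right $L_0$ the $k$th optimal eigenvalue exceeds $\beta_n^k$. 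One then has to check that this strict gain, which for small sectors is too weak to prevail (this is why $k=2$ has an infinite critical length), becomes conclusive once $l(k) \to \infty$, i.e. for all $k$ beyond some explicit threshold $N_n$; the finitely many remaining values $K(n) \le k < N_n$ are then handled by a rigorous finite run of the extension process — which is exactly what the table of numerical experiments is meant to reflect. In dimensions $n = 3$ and $n = 4$ the multiplicities are the elementary expressions $2l+1$ and $(l+1)^2$ and the warped ODE is an explicit Euler-type equation, so both the computation of $\beta_n^k$ and the lower-bound construction can be made fully effective, which is what allows the conjecture to be settled there.

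The main obstacle is the comparison in this last step, and it is genuinely sensitive to the dimension. The constraint $|h'| \le 1$ ties the achievable bump height to $L_0$, so a longer interval honestly allows a fatter — and a priori spectrally better — hypersurface; one must therefore prove that, once the governing degree $l(k)$ is large, the extra height helps less than the extra length hurts, so that the overshoot at finite $L_0$ persists into the $k$th eigenvalue. This requires estimates on the warped Dirichlet-to-Neumann matrix that are sharp up to the constant and not just up to the order of magnitude, together with control of how the sector governing the $k$th eigenvalue migrates as $L$ grows. In dimensions $3$ and $4$ these estimates go through because everything in sight is explicit; in general dimension the bookkeeping of the multiset and the needed ODE asymptotics is where the argument currently stalls, which is why Conjecture \ref{conj : critical length} remains open for $n \ge 5$.
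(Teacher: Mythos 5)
Your overall strategy coincides with the one the paper uses to settle the conjecture for $n=3,4$ (Theorem \ref{thm : low dimension}): identify the limit $\beta_n^k = \lim_{L\to\infty}B_n^k(L)$, show $\beta_n^k$ grows like $l(k)+n-2$ with $l(k)\asymp k^{1/(n-1)}$ because the Dirichlet and Neumann sector-$l$ curves both converge to $l+n-2$ with multiplicity $m_l$ each, and then exhibit some finite $L$ with $B_n^k(L)>\beta_n^k$. You also correctly locate where the dimension enters: the argument turns into a comparison between a universal constant and a multiplicity-growth exponent, and for $n\ge 5$ the inequality flips, which is exactly the content of the paper's remark after Lemma \ref{lem : tech 2}. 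In that sense the architecture of your plan matches the paper.

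Where you diverge — and where the plan has a genuine gap — is the lower bound on $\sup_L B_n^k(L)$. You propose to build a bespoke ``bump'' profile of width $\sim 1/l(k)$ and bound its $k$th eigenvalue from below sector by sector. That is an unnecessary detour: the extension process already hands you $B_n^k(L)$ as a \emph{sharp} bound, so you do not need any test hypersurface — you only need to read off the value of $B_n^k(L)$ at a well-chosen $L$. The paper does this by observing that for small $L$ the top curve is $\sigma_{(\kappa)}^N(L)$ (with $\kappa$ determined by $\sum_{j<\kappa}m_j < k \le \sum_{j\le\kappa}m_j$), and it stays so until the first crossing with $\sigma_{(0)}^D(L)$; hence $\sup_L B_n^k(L)\ge b_\kappa$, the height of that crossing. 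Lemma \ref{lem : tech 1} then shows $b_\kappa \gtrsim c_0\kappa$ with $c_0\approx 0.83$ the root of $\tfrac{1+c}{1-c}=e^{2/c}$, and Lemma \ref{lem : tech 2} bounds the large-$L$ curve index by $\lfloor c_0\kappa\rfloor - 1$, from which the comparison $c_0\kappa > \lfloor c_0\kappa\rfloor + n - 3$ closes the argument for $n\in\{3,4\}$. Your sketch never produces the analogue of the constant $c_0$ nor verifies the final numeric comparison; the phrases ``sharp up to the constant,'' ``one then has to check,'' and ``goes through because everything is explicit'' are precisely the places where the paper's two lemmas do real work that your proposal leaves open. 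Beyond the $n=3,4$ cases neither your plan nor the paper proves the conjecture, which is stated as such, so there is no additional gap there; but as a proof of the low-dimensional theorem your proposal is an outline missing its quantitative core, and the route you chose for the lower bound re-proves sharpness already supplied by Theorem \ref{thm : thm principal 3}.
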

While investigating other questions and topics, Fan et al. proved this conjecture in the special case of dimension $2$, see \parencite[Theorem 1.1]{FTY}.  This conjecture is still to be proved or refuted in general, but in this article, we are able to prove it in the case of dimensions $n=3$ and $n=4$.
\begin{thm} \label{thm : low dimension}
    Let $n \in \{3, 4\}$. Then there exists a constant $K(n) \in \N$ such that for every $k \ge K$, the $k$th eigenvalue has a finite critical length.
\end{thm}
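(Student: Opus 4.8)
\emph{Proof plan.} The plan is to combine the separation of variables for metrics of revolution with a fine analysis of the function $L\mapsto B_n^k(L)$ produced by the extension process of Theorem~\ref{thm : thm principal 3}, and to show that for $n\in\{3,4\}$ and $k$ large its supremum is attained at a finite meridian length.

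First I would set up the reduction. Expanding an eigenfunction in the basis of spherical harmonics on $\S^{n-1}$, the Steklov spectrum of $(M=[0,L]\times\S^{n-1},\,dr^2+h^2g_0)$ is the union, over the spherical modes $\ell\ge 0$, of the spectrum of a one-dimensional Steklov-type problem on $[0,L]$ with weights $h^{n-1}$ and $\lambda_\ell h^{n-3}$ (where $\lambda_\ell=\ell(\ell+n-2)$), each such one-dimensional eigenvalue being counted with the multiplicity $d_\ell$ of degree-$\ell$ spherical harmonics; each mode $\ell$ contributes exactly two eigenvalues $\sigma_\ell^-(L,h)\le\sigma_\ell^+(L,h)$, with $\sigma_0^-=0$. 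Thus $B_n^k(L)$ is obtained by choosing $h$ so as to push the $(k{+}1)$-th smallest element of the multiset $\{\sigma_\ell^\pm(L,h)\}_{\ell\ge 0}$ (with multiplicities $d_\ell$) as high as possible, and the extension process of Theorem~\ref{thm : thm principal 3} makes this optimization explicit: it isolates the finitely many modes $\ell$ that are relevant for given $n,k,L$ and the (necessarily degenerate) profile one approximates, so that $B_n^k(L)$ becomes a concrete, piecewise function of $L$.

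Second I would determine the asymptotic value $B_n^{k,\infty}:=\lim_{L\to\infty}B_n^k(L)$ explicitly. Letting $L\to\infty$ in the extension process, the two boundary components decouple, the optimal profile near each of them stabilises to that of an ``optimal cap'' with one boundary sphere, and $\sigma_\ell^\pm(L,h)$ can approach but not exceed a cap-optimal ceiling $c_\ell$ for mode $\ell$ (estimated, e.g., by comparison with the ball profile $h(r)=r$, which yields $c_\ell\ge \ell$). Hence $B_n^{k,\infty}$ is the $(k{+}1)$-th smallest element of the multiset $\{c_\ell \text{ with multiplicity } 2d_\ell\}_{\ell\ge 0}$, a completely explicit step function of $k$ once $n$ is fixed, using $d_\ell=2\ell+1$ for $n=3$ and $d_\ell=(\ell+1)^2$ for $n=4$.

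Third — and this is the heart — I would exhibit, for every sufficiently large $k$, a finite $L$ and a metric of revolution with $\sigma_k>B_n^{k,\infty}$; together with continuity of $B_n^k(\cdot)$ (so that the supremum over a compact range of $L$ is attained) and $\lim_{L\to 0}B_n^k(L)=0$, this forces, via Definition~\ref{def : critical length conj}, a finite critical length. The construction uses the low dimension crucially: because $d_\ell$ grows only linearly ($n=3$) or quadratically ($n=4$), the gaps in the staircase $k\mapsto B_n^{k,\infty}$ are controlled, and one can take a profile $h$ on a moderate interval $[0,L]$ — schematically a long cylindrical piece carrying a short, carefully shaped waist or bulge — that raises the two eigenvalues of a bounded block of low modes strictly above their asymptotic ceilings $c_\ell$ while keeping all lower eigenvalues below the target, so that the $(k{+}1)$-th eigenvalue clears $B_n^{k,\infty}$. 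One then verifies this for all $k$ past some $K(n)$; this is consistent with and refines Theorem~\ref{thm : principal principal 4}, whose sequence $(k_i)$ already supplies infinitely many such $k$, with critical lengths tending to $0$.

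The main obstacle is precisely this last step: pinning down $B_n^{k,\infty}$ exactly and producing a single family of finite-length profiles that provably beats it \emph{uniformly in $k$} for all large $k$. This demands the explicit multiplicity formulas, a careful accounting of which modes determine $\sigma_k$ for each residue class of $k$, and sharp estimates comparing how much a bounded number of low modes can be boosted at finite $L$ against the $L\to\infty$ ceilings — and it is the slow growth of $d_\ell$ that makes this bookkeeping manageable for $n=3,4$ but not (yet) in higher dimensions.
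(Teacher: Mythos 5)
Your high-level strategy is the right one: to get a finite critical length it suffices to show that $\sup_L B_n^k(L) > \lim_{L\to\infty}B_n^k(L)$, and you correctly identify that the low dimension must enter through the slow growth of the spherical-harmonic multiplicities. But the step you flag as ``the heart'' --- constructing, for every large $k$, an explicit profile (a ``long cylindrical piece carrying a short, carefully shaped waist or bulge'') that beats the asymptotic ceiling --- is exactly the step you leave unproved, and it is also not how the paper closes the gap. In fact no new metric construction is needed: the extension process already expresses $B_n^k(L)$ as a value $\sigma_{(\kappa)}^{D/N}(A_{1+L/2})$ of an annular Steklov--Dirichlet or Steklov--Neumann eigenvalue, and the whole argument can be carried out entirely by tracking which of those explicit curves carries the bound at small versus large $L$.

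Concretely, the paper's proof runs as follows. Choose $\kappa$ with $m_0+\cdots+m_{\kappa-1}<k\le m_0+\cdots+m_\kappa$. For small $L$ the bound $B_n^k(L)$ equals $\sigma_{(\kappa)}^N(A_{1+L/2})$, which increases with $L$ until this curve crosses $\sigma_{(0)}^D(A_{1+L/2})$; call the value at that crossing $b_\kappa$. Lemma~\ref{lem : tech 1} shows, from the explicit formulas of Propositions~\ref{prop : SD conj} and~\ref{prop : SN conj}, that $b_\kappa\ge c\kappa$ for any $c<c_0\approx 0.83$, where $c_0$ solves $\tfrac{1+c}{1-c}=e^{2/c}$; this gives a lower bound $B_n^k\ge b_\kappa\gtrsim c_0\kappa$. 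Separately, Lemma~\ref{lem : tech 2} is a pure multiplicity count: for $n=3$ ($m_i=2i+1$) and $n=4$ ($m_i=(i+1)^2$) one has $\sum_{i\le\kappa}m_i < 2\sum_{i<\lfloor c_0\kappa\rfloor}m_i$ for large $\kappa$, which forces $B_n^k(L)\le\sigma_{(\lfloor 0.8\kappa\rfloor-1)}^N(A_{1+L/2})\to\lfloor 0.8\kappa\rfloor+n-3$ as $L\to\infty$. Since $c_0>0.8$, the small-$L$ peak strictly exceeds the $L\to\infty$ limit, so the sup is attained at some finite $L$. The dimension restriction is exactly the failure of Lemma~\ref{lem : tech 2} for $n\ge 5$: the balance point for the multiplicity sum is $c=(1/2)^{1/n}$, which exceeds $c_0$ once $n\ge5$. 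So your intuition about where $n\in\{3,4\}$ enters is correct, but the decisive quantitative comparison is between $c_0$ and $(1/2)^{1/n}$, not a construction of cylindrical profiles; and without some version of Lemmas~\ref{lem : tech 1}--\ref{lem : tech 2} (or an equivalent explicit comparison of the annular $\sigma^{D}$ and $\sigma^{N}$ asymptotics), your proposal remains a plan rather than a proof.
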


\textbf{Plan of the paper.} In \Cref{sect : context}, we recall the context and fix some notation we will use throughout the paper. In \Cref{subsect : ext process}, we describe the extension process and prove \Cref{cor : du principal 3} and \Cref{cor : deuxieme cor du principal 3}, so that we can formulate the question and the conjecture properly in \Cref{sect : conj}. In \Cref{sect : low dimension}, we prove \Cref{thm : low dimension}, namely we solve the question for the case of dimension $n=3$ and $n=4$. Finally, in Appendix \ref{sect : plot simple} we plot the sharp upper bound as a function of $L$, then we add on these plots the mixed Steklov-Dirichlet and Steklov-Neumann eigenvalues in Appendix  \ref{sect : plot mixte} and we give our codes  in Appendix \ref{appendix : python codes}.
\medskip


\section{Hypersurfaces of revolution and mixed problems} \label{sect : context}

As explained in \cite{T_revolution}, maximizing the Steklov eigenvalues of hypersurfaces of revolution is deeply linked to the comprehension of the mixed Steklov-Dirichlet and Steklov-Neumann problem on annular domains. We recall what these mixed problems are in this section, as well as recalling what these links are.

\subsection{Characterization of the Steklov eigenvalues and eigenfunctions}

For $(M, g)$ a Riemannian manifold with smooth boundary $\Sigma$, we can characterize its $k$th Steklov eigenvalue by:
\begin{align} \label{form : car var conj}
    \sigma_k(M, g) = \min \left\lbrace R_g(f)  : f \in H^1(M), \; f \perp_\Sigma f_0, f_1, \ldots, f_{k-1} \right\rbrace,
\end{align}
where 
\begin{align*}
    R_g(f) = \frac{\int_M |\nabla f|^2 dV_g}{\int_\Sigma |f|^2 dV_\Sigma} 
\end{align*}
 is the Rayleigh quotient  and
\begin{align*}
    f \perp_\Sigma f_i \iff \int_\Sigma f f_i dV_\Sigma =0,
\end{align*}
with the $f_i$ being $\sigma_i$-eigenfunctions, $i = 1, \dots, k-1$.

Similarly, we have the usual Min-Max characterization 
\begin{align} \label{form : min max}
    \sigma_k(M, g) = \min_{S \in S_{k+1}} \max_{f \in S \backslash \{0\} } \left\{ R_g(f) \right\},
\end{align}
where $S_{k+1}$ is the family of all subspaces of dimension $k+1$ of $H^1(M)$.
\medskip

In the special setting of this article, where $(M, g)$ is a hypersurface of revolution, the corresponding eigenfunctions have a special expression. 
We denote by $0 =\lambda_0 < \lambda_1 \le \lambda_2 \le \ldots \nearrow \infty$ the spectrum of the Laplacian on $(\S^{n-1}, g_0)$ and we consider $(S_j)_{j=0}^\infty$ an orthonormal basis of eigenfunctions associated with it. 
\begin{prop} \label{prop : sep variables conj}
Let $(M, g)$ be a hypersurface of revolution  as above. Then each eigenfunction on $M$ can  be written as $f_l(r, p) = u_l(r)S_j(p)$, where  $u_j$ is a smooth function on $[0, L]$.
\end{prop}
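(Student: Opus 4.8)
The plan is a separation of variables in the spherical factor. First I would write down the Laplace--Beltrami operator and the volume element of the warped product $g = dr^2 + h^2(r) g_0$: one has $dV_g = h^{n-1}(r)\, dr\, dV_{g_0}$, and, denoting by $\Delta_{g_0}$ the (nonnegative) Laplacian of $(\S^{n-1}, g_0)$,
\begin{align*}
\Delta_g\bigl(u(r)\, S(p)\bigr) = -\frac{1}{h^{n-1}(r)}\bigl(h^{n-1}(r)\, u'(r)\bigr)'\, S(p) + \frac{1}{h^2(r)}\, u(r)\, \Delta_{g_0}S(p).
\end{align*}
In particular, if $S = S_j$ with $\Delta_{g_0} S_j = \lambda_j S_j$, then $\Delta_g(u S_j)$ is again of the separated form $(\text{function of }r)\cdot S_j(p)$; likewise, since the boundary $\Sigma = \{0,L\}\times\S^{n-1}$ has outward unit normal $-\partial_r$ along $\{0\}\times\S^{n-1}$ and $+\partial_r$ along $\{L\}\times\S^{n-1}$, the normal derivative of $u(r)S_j(p)$ on each component is again a multiple of $S_j$.

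Now let $f$ be a Steklov eigenfunction with eigenvalue $\sigma$. Since the metric and the boundary are smooth, $f$ is smooth up to the boundary, so for each $r\in[0,L]$ we may expand $f(r,\cdot)\in C^\infty(\S^{n-1})$ in the orthonormal basis $(S_j)$:
\begin{align*}
f(r,p) = \sum_{j=0}^\infty u_j(r)\, S_j(p), \qquad u_j(r) := \int_{\S^{n-1}} f(r,p)\, S_j(p)\, dV_{g_0}(p).
\end{align*}
I would then check that each $u_j$ is smooth on $[0,L]$ (differentiation under the integral sign) and that the series, together with all its $r$- and $p$-derivatives, converges uniformly on $M$; the latter follows from $f\in C^\infty(M)$ via the identity $\lambda_j^N u_j(r) = \int_{\S^{n-1}}(\Delta_{g_0}^N f)(r,p)\, S_j(p)\, dV_{g_0}(p)$, which forces $u_j(r)$ and its $r$-derivatives to decay faster than any power of $\lambda_j$, uniformly in $r$. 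Granted this, $\Delta_g$ and $\partial_\nu$ may be applied term by term. Projecting $\Delta_g f = 0$ onto $S_j$ gives, for every $j$,
\begin{align*}
-\bigl(h^{n-1} u_j'\bigr)' + \lambda_j\, h^{n-3}\, u_j = 0 \quad\text{on } [0,L],
\end{align*}
and projecting $\partial_\nu f = \sigma f$ at $r = 0$ and $r = L$ onto $S_j$ gives $-u_j'(0) = \sigma u_j(0)$ and $u_j'(L) = \sigma u_j(L)$. Hence, for each $j$ with $u_j\not\equiv 0$, the function $(r,p)\mapsto u_j(r)\, S_j(p)$ is itself a Steklov eigenfunction of $(M,g)$ with eigenvalue $\sigma$ and of the required form, and $f$ decomposes as a sum of such functions (a finite sum once one restricts to a fixed eigenspace). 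Extracting a basis of each eigenspace then yields the statement.

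The only non-formal step, and hence the main obstacle, is the regularity bookkeeping in the preceding paragraph: one must justify that the spherical Fourier expansion of $f$ can be differentiated term by term in both variables up to the boundary. This is standard once one knows $f\in C^\infty(M)$ — interior smoothness is interior elliptic regularity, and smoothness up to $\Sigma$ comes from boundary regularity for the Steklov (Robin-type) problem on a smooth manifold — but it is the part of the argument that genuinely requires justification; everything else reduces to the warped-product formula for $\Delta_g$ and orthogonal projection onto $(S_j)$. Alternatively, one can invoke the $O(n)$-symmetry of $(M, g, \Sigma)$: each eigenspace is a finite-dimensional $O(n)$-representation, hence decomposes into spherical-harmonic isotypic components, and within each such component Schur's lemma produces a basis of the separated form $u(r) S_j(p)$.
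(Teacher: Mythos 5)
The paper does not actually prove this proposition; it simply notes that the separated form of eigenfunctions on warped products is well known and points the reader to several references. Your argument is the standard separation-of-variables proof those sources rely on, and it is correct: the warped-product Laplacian formula, the spherical Fourier expansion, the projected ODE $-(h^{n-1}u_j')' + \lambda_j h^{n-3} u_j = 0$ and the boundary conditions $-u_j'(0)=\sigma u_j(0)$, $u_j'(L)=\sigma u_j(L)$ are all right, and you correctly isolate the one genuinely non-formal step — justifying term-by-term differentiation of the series up to the boundary via rapid decay of the coefficients — while also offering a clean alternative via $O(n)$-equivariance and Schur's lemma.
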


This expression for the eigenfunctions is true in the more general case of warped product manifolds (and therefore for hypersurfaces of revolution) and it is often used, see for example  \parencite[Remark 1.3]{DHN},  \parencite[Lemma 3]{Esc}, \parencite[Proposition $3.16$]{T2} or  \parencite[Proposition $9$]{XC}.

\subsection{Mixed problems on annular domains}

Let $\B_1$ and $\B_R$ be the balls in $\R^n$, with $R >1$ and $n \ge 3$, centered at the origin. The annulus $A_R$ is defined as follows: $A_R = \B_R \backslash \overline{\B}_1$. We say that this annulus is of inner radius $1$ and outer radius $R$. This particular kind of domains  shall be useful in this article.
\medskip

For such domains, it is possible to compute explicitly $\sigma_{(k)}^D(A_R)$, which is the $(k)$th eigenvalue of the Steklov-Dirichlet problem on $A_R$, counted without multiplicity.
\medskip

We state here Proposition $4$ of \cite{CV}:
\begin{prop} \label{prop : SD conj}
For $A_R$ as above, consider the Steklov-Dirichlet problem
\begin{align*}
    \left\{
    \begin{array}{ll}
       \Delta f = 0  & \mbox{ in } A_R  \\
        \partial_\nu f = \sigma f & \mbox{ on } \partial \B_1 \\
        f = 0 & \mbox{ on } \partial \B_R.
    \end{array}
    \right.
\end{align*}
Then, for $k \ge 0$, the $(k)$th eigenvalue (counted without multiplicity) of this problem is 
\begin{align*}
    \sigma_{(k)}^D(A_R) = \frac{(k+n-2)R^{2k+n-2}+k}{R^{2k+n-2}-1}.
\end{align*}
\end{prop}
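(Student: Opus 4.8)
The plan is to use the rotational symmetry of $A_R$ and separate variables. In polar coordinates $(\rho,\theta)\in(1,R)\times\S^{n-1}$ the Euclidean Laplacian is $\Delta=\partial_\rho^2+\frac{n-1}{\rho}\partial_\rho+\frac{1}{\rho^2}\Delta_{\S^{n-1}}$, where $\Delta_{\S^{n-1}}$ is the Laplacian of the round sphere. Decomposing $L^2(\S^{n-1})$ into the eigenspaces of $\Delta_{\S^{n-1}}$, spanned by spherical harmonics of degree $k\ge 0$ with eigenvalue $\lambda_k=k(k+n-2)$, and using that both the Dirichlet condition on $\partial\B_R$ and the Steklov condition on $\partial\B_1$ are rotation invariant, the eigenvalue problem decouples along this decomposition into a family of radial boundary value problems indexed by $k$. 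By completeness of spherical harmonics, every Steklov-Dirichlet eigenfunction is a finite sum of products $u(\rho)Y(\theta)$ with $Y$ of a single degree, so it suffices to analyse, for each fixed $k$, harmonic functions $f=u(\rho)Y_k(\theta)$ and to gather the eigenvalues they produce.

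For such an $f$, harmonicity is equivalent to the Euler equation $\rho^2u''+(n-1)\rho u'-\lambda_k u=0$, whose solutions are spanned by $\rho^\alpha$ with $\alpha$ a root of $\alpha(\alpha+n-2)=k(k+n-2)$, namely $\alpha=k$ and $\alpha=2-n-k$ (distinct since $n\ge 3$); thus $u(\rho)=a\rho^k+b\rho^{2-n-k}$. The Dirichlet condition $u(R)=0$ gives $b=-aR^{2k+n-2}$, so up to a scalar $u(\rho)=\rho^k-R^{2k+n-2}\rho^{2-n-k}$. Since the outward unit normal of $A_R$ along $\partial\B_1$ is $-\partial_\rho$, the Steklov condition $\partial_\nu f=\sigma f$ on $\partial\B_1$ reads $-u'(1)=\sigma\,u(1)$; using $u(1)=1-R^{2k+n-2}$ and $u'(1)=k+(k+n-2)R^{2k+n-2}$ we get
\begin{align*}
    \sigma_{(k)}^D(A_R)=\frac{-u'(1)}{u(1)}=\frac{k+(k+n-2)R^{2k+n-2}}{R^{2k+n-2}-1},
\end{align*}
which is the announced formula.

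Finally one must check that this really is the $k$th Steklov-Dirichlet eigenvalue counted without multiplicity, i.e.\ that $k\mapsto\sigma_{(k)}^D(A_R)$ is strictly increasing, so that the distinct eigenvalues are exactly these values in this order. Rewriting the formula as $\sigma_{(k)}^D(A_R)=(k+n-2)+\frac{2k+n-2}{R^{2k+n-2}-1}$ and comparing two consecutive terms, the inequality $\sigma_{(k)}^D(A_R)<\sigma_{(k+1)}^D(A_R)$ reduces, after clearing the positive denominators and setting $R=e^{y}$ with $y>0$, to the elementary estimate $\sinh\big((2k+n-1)y\big)>(2k+n-1)\sinh y$, which holds since $\sinh$ is increasing and convex on $[0,\infty)$. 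This monotonicity check, together with correctly orienting the normal on the inner sphere, is the only slightly delicate point; the rest is a routine explicit computation, so I do not anticipate a real obstacle.
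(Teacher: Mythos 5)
Your proof is correct: the separation of variables, the Euler equation with indicial roots $k$ and $2-n-k$, the sign of the outward normal $-\partial_\rho$ on $\partial\B_1$, the resulting quotient $\sigma=-u'(1)/u(1)$, and the monotonicity in $k$ (reduced to $\sinh\big((2k+n-1)y\big)>(2k+n-1)\sinh y$, which follows because $x\mapsto \sinh(x)/x$ is increasing on $(0,\infty)$) are all in order. The paper does not prove this statement itself but cites it as Proposition 4 of \cite{CV}, whose proof is precisely this standard separation-of-variables computation, so your argument matches the expected one.
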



It is also possible to get the expression of the eigenfunctions of the Steklov-Dirichlet problem on an annular domain.

\begin{lemma} \label{lem : Dirichlet conj}
Each eigenfunction $\varphi_l$ of the Steklov-Dirichlet problem on the annulus $A_{R}$ can be expressed as $\varphi_l(r,p)=\alpha_l(r)S_l(p)$, where $S_l$ is an eigenfunction for the $l^{th}$ harmonic of the sphere $\mathbb{S}^{n-1}$.
\end{lemma}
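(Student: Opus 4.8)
The plan is to prove the lemma by separation of variables, exactly as in the proof of \Cref{prop : sep variables conj}, adapted to keep track of the two boundary conditions. First I would write the annulus $A_R$ in polar coordinates, identifying it isometrically with $\bigl([1,R]\times\S^{n-1},\, dr^2+r^2g_0\bigr)$, so that the Euclidean Laplacian becomes
\begin{align*}
\Delta f = \partial_r^2 f + \frac{n-1}{r}\,\partial_r f + \frac{1}{r^2}\,\Delta_{g_0} f.
\end{align*}
Using the orthonormal basis $(S_j)_{j=0}^\infty$ of spherical harmonics with $\Delta_{g_0}S_j=\lambda_j S_j$, any $\varphi\in H^1(A_R)$ expands as $\varphi(r,p)=\sum_{j}u_j(r)S_j(p)$, with $u_j(r)=\int_{\S^{n-1}}\varphi(r,\cdot)\,S_j\,dV_{g_0}$, and this decomposition is compatible with the traces of $\varphi$ on $\partial\B_1$ and $\partial\B_R$.

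Next I would observe that all three conditions defining the problem are diagonal with respect to this decomposition. Harmonicity $\Delta\varphi=0$ is equivalent to the family of Euler-type ODEs
\begin{align*}
u_j''+\frac{n-1}{r}\,u_j'-\frac{\lambda_j}{r^2}\,u_j=0 \qquad \text{on }[1,R],
\end{align*}
whose solution space, writing $\lambda_j=l(l+n-2)$ for the harmonic degree $l$, is spanned by $r^l$ and $r^{2-n-l}$. The Dirichlet condition $\varphi|_{\partial\B_R}=0$ is equivalent to $u_j(R)=0$ for every $j$, which cuts this two-dimensional space down to the line spanned by $\alpha_l(r):=r^l-R^{2l+n-2}r^{2-n-l}$ (up to normalization, depending only on $l$, not on $j$). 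Finally, since the outward unit normal of $A_R$ along $\partial\B_1$ is $-\partial_r$, the Steklov condition $\partial_\nu\varphi=\sigma\varphi$ on $\partial\B_1$ becomes $-u_j'(1)=\sigma\,u_j(1)$ for every $j$.

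Putting this together, a nonzero $\varphi$ solves the Steklov-Dirichlet problem with eigenvalue $\sigma$ if and only if, for each $j$ with $u_j\not\equiv 0$, one has $u_j=c_j\alpha_{l(j)}$ with $l(j)$ the harmonic degree of $S_j$ and $-\alpha_{l(j)}'(1)/\alpha_{l(j)}(1)=\sigma$; a direct computation shows this last equality holds precisely when $\sigma=\sigma_{(l)}^D(A_R)$ as in \Cref{prop : SD conj}, which is strictly increasing in $l$. Hence the $\sigma_{(l)}^D(A_R)$-eigenspace is exactly $\operatorname{span}\{\alpha_l(r)S_j(p): \lambda_j=l(l+n-2)\}$, and in particular, after choosing a basis of this eigenspace, every eigenfunction is of the claimed product form $\varphi_l(r,p)=\alpha_l(r)S_l(p)$ with $S_l$ a spherical harmonic of degree $l$. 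I expect the only point requiring care to be the bookkeeping of multiplicities — namely, that the eigenspace, while possibly of dimension larger than one since $\lambda_l$ has multiplicity on $\S^{n-1}$, still admits a basis of separated form — together with getting the sign of the normal right; no genuinely hard step is involved, the statement being the standard separation of variables on annuli specialised to these mixed boundary conditions.
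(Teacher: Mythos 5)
Your proof is correct and complete. The paper states this lemma without proof, treating it as standard (citing \cite{CV}, where Proposition~\ref{prop : SD conj} is established by exactly this Frobenius/Euler-equation computation, and the surrounding remarks on separation of variables on warped products). Your argument is the expected one: expand in spherical harmonics, reduce harmonicity to the Euler ODE with solution space $\mathrm{span}\{r^l, r^{2-n-l}\}$, impose $u_j(R)=0$ to pin down $\alpha_l$, and observe that the Steklov condition at $r=1$ with outward normal $-\partial_r$ recovers the eigenvalue formula $\sigma_{(l)}^D(A_R)=\frac{(l+n-2)R^{2l+n-2}+l}{R^{2l+n-2}-1}$. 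Two small points are handled correctly but deserve emphasis: the strict monotonicity of $l\mapsto\sigma_{(l)}^D(A_R)$ is what guarantees each eigenspace lives in a single harmonic degree, and since the radial factor $\alpha_l$ depends only on $l$ and not on the choice of $S_l^i$, \emph{every} element of the eigenspace (not merely a chosen basis) factors as $\alpha_l(r)\cdot(\text{degree-}l\text{ spherical harmonic})$, which is exactly the form the lemma claims.
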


Similarly we can compute explicitly $\sigma_{(k)}^N(A_R)$, which is the $(k)$th eigenvalue of the Steklov-Neumann problem on $A_R$, counted without multiplicity.
\medskip

We state now Proposition $5$ of \cite{CV}:
\begin{prop} \label{prop : SN conj}
For $A_R$ as above, consider the Steklov-Neumann problem
\begin{align*}
    \left\{
    \begin{array}{ll}
       \Delta f = 0  & \mbox{ in } A_R  \\
        \partial_\nu f = \sigma f & \mbox{ on } \partial \B_1 \\
        \partial_\nu f = 0 & \mbox{ on } \partial \B_R.
    \end{array}
    \right.
\end{align*}
Then, for $k \ge 0$, the $(k)$th eigenvalue (counted without multiplicity) of this problem is 
\begin{align*}
    \sigma_{(k)}^N(A_R) = k \frac{(k+n-2)(R^{2k+n-2}-1)}{kR^{2k+n-2}+k+n-2}.
\end{align*}
\end{prop}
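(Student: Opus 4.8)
\textbf{Proof plan for Proposition \ref{prop : SN conj}.}

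The plan is to solve the Steklov--Neumann mixed problem on the annulus $A_R$ explicitly, sector by sector, using the separation of variables that is already available for warped products (\Cref{prop : sep variables conj}, or rather its exact analogue for annuli in $\R^n$). First I would write each eigenfunction in the form $f(r,p) = \alpha(r) S_k(p)$, where $S_k$ is a spherical harmonic of degree $k$ on $\S^{n-1}$, so that $\Delta_{g_0} S_k = -\lambda_k S_k$ with $\lambda_k = k(k+n-2)$. Substituting into $\Delta f = 0$ in $A_R$ reduces the PDE to the radial ODE
\begin{align*}
    \alpha''(r) + \frac{n-1}{r}\alpha'(r) - \frac{k(k+n-2)}{r^2}\alpha(r) = 0,
\end{align*}
which is of Euler type and has general solution $\alpha(r) = a\, r^{k} + b\, r^{-(k+n-2)}$ (for $k \ge 0$; note when $k=0$ this degenerates to constants, consistent with $\sigma_{(0)}^N = 0$).

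Next I would impose the two boundary conditions. The Neumann condition on the outer sphere $\partial\B_R$ reads $\alpha'(R) = 0$, i.e.
\begin{align*}
    a\, k R^{k-1} - b\,(k+n-2) R^{-(k+n-1)} = 0,
\end{align*}
which fixes the ratio $b/a = \frac{k}{k+n-2} R^{2k+n-2}$ (up to the trivial case $k=0$). On the inner sphere $\partial\B_1$ the outward normal for the domain $A_R$ points towards the origin, so $\partial_\nu f = -\alpha'(1) S_k$, and the Steklov condition $\partial_\nu f = \sigma f$ becomes
\begin{align*}
    -\alpha'(1) = \sigma\,\alpha(1),
\end{align*}
that is $-\big(a k - b(k+n-2)\big) = \sigma (a+b)$. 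Substituting the value of $b/a$ obtained above and simplifying gives
\begin{align*}
    \sigma_{(k)}^N(A_R) = \frac{-\,k + (k+n-2)\cdot \frac{k}{k+n-2}R^{2k+n-2}}{1 + \frac{k}{k+n-2}R^{2k+n-2}}
    = k\,\frac{(k+n-2)(R^{2k+n-2}-1)}{k R^{2k+n-2} + k + n - 2},
\end{align*}
which is exactly the claimed formula. I would then note that as $k$ ranges over $\N$ (with the degree of the spherical harmonic equal to $k$), these values are strictly increasing in $k$, so this indeed enumerates the distinct Steklov--Neumann eigenvalues counted without multiplicity, the multiplicity of $\sigma_{(k)}^N(A_R)$ being the dimension of the space of degree-$k$ spherical harmonics.

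There is no serious obstacle here: the argument is a routine ODE computation combined with the standard spherical-harmonic decomposition, entirely parallel to the Steklov--Dirichlet case in \Cref{prop : SD conj} and \Cref{lem : Dirichlet conj}. The only points requiring a little care are (i) the sign of the outward normal on the inner boundary component (it points inward, towards the origin, so $\partial_\nu = -\partial_r$ on $\partial\B_1$), and (ii) checking that the list $\{\sigma_{(k)}^N(A_R)\}_{k\ge 0}$ is strictly monotone in $k$ so that "counted without multiplicity" is justified and no eigenvalue is missed; both are elementary. For completeness one should also verify that for each fixed spherical-harmonic degree $k\ge 1$ the pair of boundary conditions admits exactly one Steklov eigenvalue (the radial ODE being second order with a one-dimensional solution space after imposing the Neumann condition at $R$), which again is immediate from the explicit solution $\alpha(r) = a r^k + b r^{-(k+n-2)}$.
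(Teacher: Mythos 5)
Your proposal is correct: the separation into spherical harmonics, the Euler ODE with general solution $a r^k + b r^{-(k+n-2)}$, the elimination of $b/a$ via the outer Neumann condition, and the sign convention $\partial_\nu = -\partial_r$ on $\partial\B_1$ all check out, and the final algebra reproduces the stated formula. The paper itself gives no proof of this proposition — it simply quotes Proposition~5 of \cite{CV} — and your argument is precisely the standard derivation used there, so there is nothing further to reconcile.
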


In the same manner as before, we have the following:

\begin{lemma} \label{lem : Neumann conj}
Each eigenfunction $\phi_l$ of the Steklov-Neumann problem on the annulus $A_{R}$ can be expressed as $\phi_l(r,p)=\beta_l(r)S_l(p)$, where $S_l$ is an eigenfunction for the $l^{th}$ harmonic of the sphere $\mathbb{S}^{n-1}$.
\end{lemma}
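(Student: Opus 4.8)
\medskip

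The plan is to run the classical separation of variables on the rotationally symmetric domain $A_R$, exactly as for the Steklov--Dirichlet case \Cref{lem : Dirichlet conj}, the only modification being the boundary condition imposed on the outer sphere. Writing $A_R\cong[1,R]\times\S^{n-1}$ with the Euclidean metric $dr^2+r^2g_0$, the Laplace equation for a product $f(r,p)=\alpha(r)S(p)$, with $S$ a spherical harmonic of degree $l$ (so $\Delta_{g_0}S=\lambda_l S$ and $\lambda_l=l(l+n-2)$), reduces to the Euler equation
\[
    \alpha''(r)+\frac{n-1}{r}\,\alpha'(r)-\frac{\lambda_l}{r^2}\,\alpha(r)=0 .
\]
Its indicial roots $l$ and $-(l+n-2)$ are distinct for all $l\ge 0$ and $n\ge 3$, so the radial solution space is $\mathrm{span}\{r^l,\,r^{-(l+n-2)}\}$ (and $\mathrm{span}\{1,\,r^{2-n}\}$ when $l=0$), with no logarithmic term. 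The homogeneous Neumann condition $\alpha'(R)=0$ is a single nondegenerate linear constraint on this two-dimensional space, hence picks out a radial profile $\beta_l$ unique up to scaling; the remaining Steklov condition on $\partial\B_1$ — where the outward normal of $A_R$ is $-\partial_r$, so it reads $-\beta_l'(1)=\sigma\,\beta_l(1)$ — then forces $\sigma$ to equal the value $\sigma^N_{(l)}(A_R)$ of \Cref{prop : SN conj}. Thus, for each $l$ and each degree-$l$ spherical harmonic $S_l$, the function $\beta_l(r)S_l(p)$ is a Steklov--Neumann eigenfunction of the asserted form.

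For the converse — that there are no other eigenfunctions — I would invoke uniqueness of the mixed extension. Any Steklov--Neumann eigenfunction $\phi$ is the unique function harmonic in $A_R$, with vanishing normal derivative on $\partial\B_R$, and with trace $u:=\phi|_{\partial\B_1}$ on the inner sphere. Expanding $u=\sum_{l\ge 0}u_l$ in spherical harmonics (where $u_l$ has degree $l$), each $\beta_l(r)u_l(p)$ — with $\beta_l$ normalised by $\beta_l(1)=1$ — is harmonic, meets the Neumann condition on $\partial\B_R$ by the very choice of $\beta_l$, and has trace $u_l$; summing over $l$ and using uniqueness yields $\phi(r,p)=\sum_{l\ge 0}\beta_l(r)u_l(p)$. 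Imposing $-\partial_r\phi=\sigma\phi$ on $\partial\B_1$ and using orthogonality of harmonics of distinct degrees gives $-\beta_l'(1)\,u_l=\sigma\,u_l$ for every $l$, so $u_l=0$ unless $-\beta_l'(1)=\sigma^N_{(l)}(A_R)=\sigma$.

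The hard part — and it is mild — will be to rule out the possibility that two different degrees $l$ contribute, i.e.\ to establish that $l\mapsto\sigma^N_{(l)}(A_R)$ is injective. This follows from the explicit formula
\[
    \sigma^N_{(k)}(A_R)=k\,\frac{(k+n-2)\bigl(R^{2k+n-2}-1\bigr)}{kR^{2k+n-2}+k+n-2}
\]
of \Cref{prop : SN conj}, by checking that $k\mapsto\sigma^N_{(k)}(A_R)$ is strictly increasing for $k\ge 0$ — for instance after the substitution $x=R^{2k+n-2}>1$, by differentiating in $k$, or by a direct comparison estimate — consistently with the strict monotonicity of $\lambda_k=k(k+n-2)$. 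Granting this, at most one index $l$ survives, so $\phi=\beta_l(r)u_l(p)$ with $u_l$ a spherical harmonic of degree $l$; in particular the eigenspace attached to $\sigma^N_{(l)}(A_R)$ equals $\{\beta_l(r)S(p):S\text{ of degree }l\}$ and the (without multiplicity) Steklov--Neumann spectrum of $A_R$ is exactly $\{\sigma^N_{(l)}(A_R):l\ge 0\}$, in agreement with \Cref{prop : SN conj}. Everything outside this monotonicity check is routine and parallels verbatim the proof of \Cref{lem : Dirichlet conj}.
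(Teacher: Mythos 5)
The paper states \Cref{lem : Neumann conj} without proof, in parallel with \Cref{lem : Dirichlet conj} (deferring to \cite{CV}), so there is no in-paper argument to compare against; your separation-of-variables proof is the standard one and is correct. You also correctly flag the one point that is easy to gloss over: the formulation ``every eigenfunction is a single product'' (as opposed to the weaker ``each eigenspace admits a product basis'') requires $l\mapsto\sigma^N_{(l)}(A_R)$ to be injective. That check can be closed cleanly: setting $m=n-2$ and $t_l=R^{2l+m}$, the formula of \Cref{prop : SN conj} rearranges, for $l\ge 1$, to
\begin{align*}
\frac{1}{\sigma^N_{(l)}(A_R)} \;=\; \frac{l\,t_l + l + m}{l(l+m)(t_l-1)} \;=\; \frac{1}{l+m}\left(1+\frac{1}{t_l-1}\right)+\frac{1}{l}\cdot\frac{1}{t_l-1},
\end{align*}
and each summand on the right is positive and strictly decreasing in $l$ (since $R>1$ makes $t_l$, hence $t_l-1$, strictly increasing), so $l\mapsto\sigma^N_{(l)}(A_R)$ is strictly increasing on $l\ge 1$; together with $\sigma^N_{(0)}=0<\sigma^N_{(1)}$ this gives injectivity. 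With that in hand, your expansion of the inner-boundary trace in spherical harmonics, combined with uniqueness of the mixed harmonic extension (the homogeneous problem has only the zero solution by the energy identity), completes the argument exactly as you outline. The remaining details — nondegeneracy of the Neumann condition at $r=R$ on $\mathrm{span}\{r^l,r^{-(l+n-2)}\}$, and the sign of the outward normal $-\partial_r$ on $\partial\B_1$ — you have right.
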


\subsection{Multiplicity of the eigenvalues}

Since we will have to deal with several problems and the multiplicity of the eigenvalues, we start by giving some notation, summarized in the table below:
\begin{align*}
    \begin{array}{l|cc}
      \mbox{Problem} & k \mbox{th eigenvalue} & k\mbox{th eigenfunction} \\
      \hline
      \mbox{Laplace} & \lambda_k & S_k  \\
      \mbox{Steklov} & \sigma_k & f_k=u_kS_j  \\
      \mbox{Steklov-Dirichlet} & \sigma_k^D & \varphi_k = \alpha_kS_k \\
      \mbox{Steklov-Neumann} & \sigma_k^N & \phi_k = \beta_kS_k
    \end{array}
\end{align*}
\medskip
 We also write $\lambda_{(k)}, \sigma_{(k)}, \sigma_{(k)}^D, \sigma_{(k)}^N$ for the $(k)$th eigenvalue counted without multiplicity.

\medskip

In the case of the classical Laplacian problem $\Delta S = \lambda S$ on  $(\S^{n-1}, g_0)$, it is known that the set of eigenvalues is $\{ \lambda_{(k)} = k(n+k-2) : k \ge 0 \}$, see \parencite[Page 160-162]{BGM}. Besides, the multiplicity $m_0$ of $\lambda_{(0)}=0$ is $1$ and the multiplicity of $\lambda_{(k)}$ is 
\begin{align} \label{frm : multiplicite conj}
    m_k:=\frac{(n+k-3)(n+k-4) \ldots n(n-1)}{k!}(n+2k-2).
\end{align}
 
Thus, given $k \ge 0$, there are $m_k$ independent functions $S_k^1, \ldots, S_k^{m_k}$ which satisfy
\begin{align*}
    \Delta S_k^i = \lambda_{(k)} S_k^i,  \mbox{ for all } i=1, \ldots, m_k.
\end{align*}
Moreover, for both the Steklov-Dirichlet and Steklov-Neumann problems on annular domains, the multiplicity of the $(k)$th eigenvalue is exactly $m_k$, as stated by \parencite[Proposition 3]{CV}. 

Hence, using the notation above, given $k \ge 0$, 
\begin{enumerate}
    \item There are  exactly $m_k$ linearly independent Steklov-Dirichlet eigenfunctions associated with  $\sigma_{(k)}^D(A_{1+L/2})$, that can be written $\varphi_k^i(r,p) = \alpha_{k}(r)S_{k}^i(p), \; i = 1, \ldots, m_k$.
    \item There are  exactly $m_k$ linearly independent Steklov-Neumann eigenfunctions associated with  $\sigma_{(k)}^N(A_{1+L/2})$, that  can be written $\phi_k^i(r,p) = \beta_{k}(r)S_k^i(p), \; i = 1, \ldots, m_k$.
\end{enumerate}

\section{Extension process} \label{subsect : ext process}

It is natural to wonder if we can get a general formula giving a sharp upper bound for the $k$th Steklov eigenvalue of $(M, g)$, depending on $L$ and $n$ or even only on $n$.
The expression for such a general bound is difficult to obtain. However, one can introduce a process, that we call \textit{extension process}, leading to a sharp bound $B_n^k(L)$ such that $\sigma_k(M,g) < B_n^k(L)$. This process is the following.
\medskip

Let $(M=[0, L] \times \S^{n-1}, g)$ be a hypersurface of revolution and let $k \in \N^*$. We define 
\begin{align*}
   l_0 := \max \left\{ l \in \N : \sum_{i=0}^l m_i \le k \right\}.
\end{align*}

Let us now consider the finite set 
\begin{align*}
    E:=\left\{ \sigma_{(0)}^D(A_{1+L/2}), \ldots, \sigma_{(l_0)}^D(A_{1+L/2}), \sigma_{(1)}^N(A_{1+L/2}), \ldots, \sigma_{(l_0+1)}^N(A_{1+L/2}) \right\}.
\end{align*}

One can rearrange this set in ascending order, i.e we choose $\pi \in Sym(E)$ such that the finite sequence 
\begin{align*}
      \left(  \pi(\sigma_{(0)}^D(A_{1+L/2}) ),   \ldots,  \pi( \sigma_{(l_0)}^D(A_{1+L/2})),   \pi ( \sigma_{(1)}^N(A_{1+L/2}) ),   \ldots,   \pi(\sigma_{(l_0+1)}^N(A_{1+L/2}) ) \right)
\end{align*}
is increasing. For practical reasons, we rename this sequence 
\begin{align*}
    \left( \nu_0,  \ldots,  \nu_{l_0},  \nu_{l_0+1},  \ldots,  \nu_{2l_0+1} \right).
\end{align*}

Let us now consider the corresponding multiplicities. For $i \in \{0, \ldots, 2l_0+1\}$, we write $ \mu_i $ for the multiplicity of $\nu_i$. This gives us a finite sequence 
\begin{align*}
     ( \mu_0, \ldots, \mu_{l_0}, \mu_{l_0+1}, \ldots, \mu_{2l_0+1}).
\end{align*}

We define 
\begin{align*}
    l_1 := \min \left\{ l \in \{0, \ldots, 2l_0+1\} : \sum_{i=0}^l \mu_i \ge k \right\},
\end{align*}
and we will prove that 
\begin{align*}
    B_n^k(L):= \nu_{l_1}
\end{align*}
is a sharp upper bound for $\sigma_k(M,g)$.

\begin{expl}
We take the case $n=5$ and $L=1$, and we choose $k=127$. Using the formula 
\begin{align*}
    m_i=\frac{(n+i-3)(n+i-4) \ldots n(n-1)}{i!}(n+2i-2),
\end{align*}
 we have $m_0=1, \; m_1=5, \; m_2=14, \; m_3=30, \; m_4=55, \; m_5=91$. 
 Therefore, $\sum_{l=0}^4 m_l= 105 \le k=127$, but $\sum_{l=0}^5 m_l = 196  > k$, which means that we have $l_0=4$.

We consider the set 
\begin{align*}
    E= \{ & \sigma_{(0)}^D(A_{3/2}), \sigma_{(1)}^D(A_{3/2}),  \sigma_{(2)}^D(A_{3/2}), \sigma_{(3)}^D(A_{3/2}), \sigma_{(4)}^D(A_{3/2}),  \\
    & \sigma_{(1)}^N(A_{3/2}), \sigma_{(2)}^N(A_{3/2}), \sigma_{(3)}^N(A_{3/2}),      \sigma_{(4)}^N(A_{3/2}), \sigma_{(5)}^N(A_{3/2}) \}.
\end{align*}

Using Propositions \ref{prop : SD conj} and \ref{prop : SN conj}, we can determine the value of the $10$ numbers belonging to the set $E$, and arrange them in an ascending order. We get 
\begin{align*}
    (& \sigma_{(1)}^N(A_{3/2}) =:  \nu_0 \approx 2.27, \sigma_{(2)}^N(A_{3/2}) =: \nu_1 \approx 4.11, \sigma_{(0)}^D(A_{3/2}) =: \nu_2 \approx 4.26, \\
    & \sigma_{(1)}^D(A_{3/2}) =: \nu_3 \approx 4.76, \sigma_{(2)}^D(A_{3/2}) =: \nu_4 \approx 5.44,  \sigma_{(3)}^N(A_{3/2})=: \nu_5 \approx 5.55, \\
    & \sigma_{(3)}^D(A_{3/2}) =: \nu_6 \approx 6.24, \sigma_{(4)}^N(A_{3/2})=: \nu_7 \approx 6.78, \sigma_{(4)}^D(A_{3/2}) =: \nu_8 \approx 7.13, \\
    & \sigma_{(5)}^N(A_{3/2}) =:\nu_9 \approx 7.89 ).
\end{align*}
Writing $\mu_i$ for the multiplicity of $\nu_i$, we can associate to $(\nu_i)_{i=0}^9$ the sequence $(\mu_i)_{i=0}^9$. We get 
\begin{align*}
    (  \mu_0 = 5, \mu_1 = 14, \mu_2 = 1, \mu_3 = 5, \mu_4 = 14, \mu_5 = 30, \mu_6 = 30, \mu_7 = 55, \mu_8 = 55, \mu_9 = 91).
\end{align*}
We determine the value of $l_1$: since $\sum_{i=0}^6 \mu_i = 99 < k$ and $\sum_{i=0}^7 \mu_i = 154 \ge k$, we have $l_1=7$.
\medskip

We have a sharp upper bound for $\sigma_{127}([0, 1] \times \S^{4},g)$, given by 
\begin{align*}
    \sigma_{127}([0, 1] \times \S^{4},g) < B_5^{127}(1) = \nu_7 = \sigma_{(4)}^N(A_{3/2}) \approx 6.78.
\end{align*}
One can then vary the value of $L$ in order to find a sharp upper bound $B_5^{127}(L)$, that we can represent in an axis system, see Figure \ref{fig: expl}.
\begin{figure}[H]
    \centering
    \includegraphics[width=\textwidth]{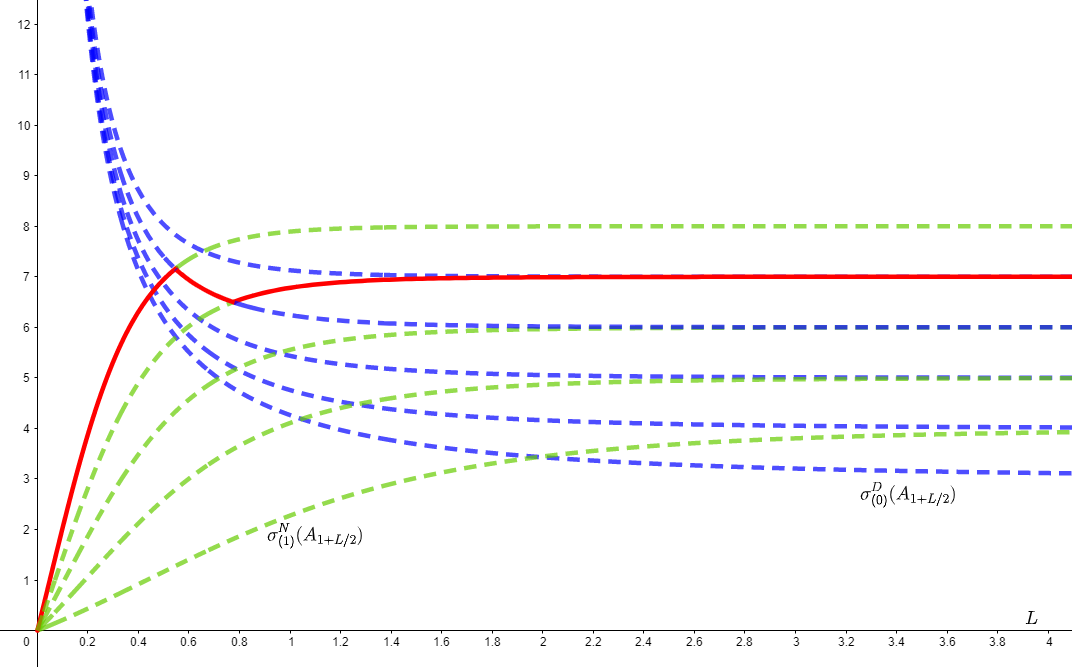}
    \caption[Sharp upper bound $B_5^{127}(L)$]{Representation of the case $n=5$. The decreasing blue curves represent  $L \longmapsto \sigma_{(i)}^D(A_{1+L/2})$ for $i = 0, \ldots, 4$ while the increasing green curves represent $L \longmapsto \sigma_{(i)}^N(A_{1+L/2})$ for $i= 1, \ldots, 5$. The solid red curve is the sharp upper bound $B_5^{127}(L)$ that we obtain thanks to the extension process.}
    \label{fig: expl}
\end{figure}

\end{expl}

Let us prove that $B_n^k(L)$ is indeed the bound that we were looking for.

\begin{proof}[Proof of Theorem \ref{thm : thm principal 3}]
We will use the same strategy that was already used in the proof of \parencite[Theorem 3]{T_revolution}. Let us start by showing that $\sigma_k(M,g) < B_n^k(L)$. 
\medskip

We consider $\nu_0, \ldots, \nu_{l_1}$ and the $k_1+1$ independent eigenfunctions $\xi_0, \ldots, \xi_{k_1}$ associated,  where $\nu_{i}$ is either a Steklov-Dirichlet or Steklov-Neumann eigenvalue on $A_{1 + L/2}$.
Note that by construction, we have $k_1 +1 \ge k$.

For each $i = 0, \ldots, k_1$ the function
\begin{align*}
    \xi_{i} : [0, \frac{L}{2}] \times \S^{n-1} \longrightarrow \R,
\end{align*}
is either a Steklov-Dirichlet or Steklov-Neumann eigenfunctions, dependending on the nature of $\nu_i$, hence we split the proof into two cases:
\begin{enumerate}
    \item Let us suppose that $\xi_{i}$ is a Steklov-Dirichlet eigenfunction. Then $\xi_{i}(L/2)=0$ and  we can extend $\xi_{i}$ to a new  function 
    \begin{align*}
        \tilde{\xi}_{i} : [0, L] \times \S^{n-1} & \longrightarrow \R \\
        (r, p) & \longmapsto \left\{ 
        \begin{array}{ll}
            \xi_{i}(r,p) & \mbox{ if } 0 \le r \le L/2  \\
            -\xi_{i}(L-r,p) & \mbox{ if } L/2 \le r \le L. 
        \end{array}
        \right.
    \end{align*}
    The function $\tilde{\xi}_{i}$ is continuous. 
    \item Let us suppose that $\xi_{i}$ is a Steklov-Neumann eigenfunction. Then we can extend $\xi_{i}$ to a new  function 
    \begin{align*}
        \tilde{\xi}_{i} : [0, L] \times \S^{n-1} & \longrightarrow \R \\
        (r, p) & \longmapsto \left\{ 
        \begin{array}{ll}
            \xi_{i}(r,p) & \mbox{ if } 0 \le r \le L/2  \\
            \xi_{i}(L-r,p) & \mbox{ if } L/2 \le r \le L. 
        \end{array}
        \right.
    \end{align*}
    The function $\tilde{\xi}_{i}$ is continuous. 
   
\end{enumerate}

One can check that all these functions $\tilde{\xi}_i$ are two by two orthogonal, therefore 
    $$\dim \, \mathrm{Span}(\xi_0, \ldots, \xi_{k_1}) = k_1 +1 \ge k.$$
Now, using the Min-Max  characterization (\ref{form : min max}) of the Steklov eigenvalues, we have

\begin{align*} 
    \sigma_k(M, g) & \le \max_{i=0, \ldots, k_1} \left\{ R_g(\tilde{\xi}_{i}) \right\}  < \max_{i=0, \ldots, k_1} \left\{ R_{\tilde{g}}(\tilde{\xi}_{i}) \right\}, \nonumber  
\end{align*}
    where  $\tilde{g}=  dr^2 + \tilde{h}^2g_0$  is given by a symmetric $\tilde{h}$ satisfying
    \begin{align*}
        \tilde{h}(r) = \begin{cases}
            1 + r & \text{if } 0 \leq  r \leq  m-1, \\
            1 + L - r & \text{if } L - m + 1 \leq r \leq L, \\
            \frac{1 + L/2 + m}{2} & \text{if } r = \frac{L}{2},
        \end{cases}
    \end{align*}
    and $h(r) > m$ for $r \in ( m-1, L-m+1 )$,
    where $m = \max_{r \in [0,L]} h(r)$. The inequality between Rayleigh quotients is then a consequence of the fact that, by construction, $\tilde{h}(r) \geq h(r)$ for all $r$ (see the proof of \parencite[Theorem 2]{T_revolution} for details).
    
    Therefore, we have
\begin{align*}
     \sigma_k(M, g) & \le \max_{i=0, \ldots, k_1}  \left\{ R_g(\tilde{\xi}_{i})  \right\} 
      <  \max_{i=0, \ldots, k_1}  \left\{ R_{\tilde{g}}(\tilde{\xi}_{i})  \right\} \\
    & = \max_{i=0, \ldots, k_1}  \left\{ \frac{\int_0^L \int_{\S^{n-1}} \left( (\partial_r \tilde{\xi}_{i})^2 + \frac{1}{\tilde{h}(r)^2} | \Tilde{\nabla} \Tilde{\xi}_{i}|^2  \right) \tilde{h}(r)^{n-1}   dV_{g_0}dr}{\int_\Sigma \tilde{\xi}_{i}^2(0,p) dV_{\Sigma}}  \right\}\\
    & = \max_{i=0, \ldots, k_1}  \left\{ \frac{2 \times \int_0^{L/2} \int_{\S^{n-1}} \left( (\partial_r \tilde{\xi}_{i})^2 + \frac{1}{\tilde{h}(r)^2} | \Tilde{\nabla} \tilde{\xi}_{i}|^2  \right) \tilde{h}(r)^{n-1}  dV_{g_0}dr}{2 \times \int_{\S^{n-1}} \tilde{\xi}_{i}^2(0,p) dV_{g_0}}  \right\}   \\
    & < \max_{i=0, \ldots, k_1}  \left\{  \frac{\int_0^{L/2} \int_{\S^{n-1}} \left( (\partial_r \xi_{i})^2 + \frac{1}{(1+r)^2} | \Tilde{\nabla}\xi_{i}|^2  \right) (1+r)^{n-1}   dV_{g_0}dr}{\int_{\S^{n-1}} \xi_{i}^2(0,p) dV_{g_0}}   \right\} \\
    & = \frac{\int_0^{L/2} \int_{\S^{n-1}} \left( (\partial_r \xi_{k_1})^2 + \frac{1}{(1+r)^2} | \Tilde{\nabla}\xi_{k_1}|^2  \right) (1+r)^{n-1}   dV_{g_0}dr}{\int_{\S^{n-1}} \xi_{k_1}^2(0,p) dV_{g_0}}   \quad  (\mbox{by construction}) \\
    & = \nu_{l_1}, 
\end{align*}
where we use the symmetry of $\tilde{g}$ on the third line and the second strict inequality comes from the  existence of a continuum of points $r \in [0, L/2]$ such that $\tilde{h}(r) < 1+r$. 
\medskip

Let $B : = \nu_{l_1}$, we will prove that $B$ is the desired sharp upper bound $B_n^k(L)$. It remains to show that $B$  is sharp, i.e for all $\epsilon >0$, there exists a metric of revolution $g_\epsilon$ on $M$ such that 
\begin{align*}
    \sigma_k(M, g_\epsilon) > B -\epsilon.
\end{align*}

Let $\epsilon >0$. 
Let $M=[0, L] \times \S^{n-1}$ and let $g_\epsilon(r,p) = dr^2+h_\epsilon^2(r)g_0(p)$ be a metric of revolution on $M$ such that 
\begin{enumerate}
    \item  For all $r \in [0, L]$, we have $h_\epsilon(r)=h_\epsilon(L-r)$ (i.e $h_\epsilon$ is symmetric).
    \item For all $r \in [0, L/2- \delta]$, we have $h_\epsilon(r)=(1+r)$, with $\delta$ small enough to guarantee that for all $r \in [0, L/2]$, we have $\max\{(1+r)^{n-3}-h_\epsilon(r)^{n-3}, (1+r)^{n-1}-h_\epsilon(r)^{n-1} \}  < \frac{\epsilon}{B} =: \epsilon^*$. 
\end{enumerate}

We  write $f_k$ a Steklov eigenfunction associated with $\sigma_k(M, g_\epsilon)$. Because $g_\epsilon$ is symmetric, we can choose $f_k$ symmetric or anti-symmetric. Writing 
\begin{align*}
    R_{A_{1+L/2}}(f_k) :=  \frac{ \int_0^{{L}/2} \int_{\S^{n-1}} \left( (\partial_r f_k)^2 + \frac{1}{(1+r)^2} | \Tilde{\nabla} f_k|^2  \right) (1+r)^{n-1}   dV_{g_0}dr}{ \int_{\S^{n-1}} (f_k)^2(0,p) dV_{g_0}},
\end{align*}
it is an easy computation to check that 
\begin{align*}
     R_{A_{1+L/2}}(f_k) < \sigma_k(M, g_\epsilon) + \epsilon.
\end{align*}
We once again split the proof into two cases:
\begin{enumerate}
    \item Let us suppose that $f_k$ is  anti-symmetric, i.e we have $f_k(r,p)=u_k(r)S_l(p)$ with $u_k$ anti-symmetric and $B = \sigma_j^D(A_{1+L/2})$ for a certain $j \le l$. Then we can check that 
    \begin{align*}
        f_k(L/2, p) = 0 \mbox{ and } \int_{\{0\} \times \S^{n-1}} \varphi_i f_k dV_{g_0} = 0 \mbox{ for all } i= 0, \ldots, l-1.
    \end{align*}
    \item Let us suppose that $f_k$ is symmetric, i.e we have $f_k(r,p)=u_k(r)S_l(p)$ with $u_k$ symmetric  and $B = \sigma_j^N(A_{1+L/2})$ for a certain $j \le l$. Then we can check that 
    \begin{align*}
        \partial_r f_k(L/2, p) =0 \mbox{ and }  \int_{\{0\} \times \S^{n-1}} \phi_i f_k dV_{g_0} = 0 \mbox{ for all } i= 0, \ldots, l-1.
    \end{align*}
     \end{enumerate}
    In both cases,  we can use $f_k|_{[0, L/2] \times \S^{n-1}}$ as a test function for $B$, and we get 
    \begin{align*}
         B < R_{A_{1+L/2}}(f_k) < \sigma_k(M, g_\epsilon) + \epsilon.
    \end{align*}

Therefore, $B = \nu_{l_1}$ is indeed the sharp upper bound $B_n^k(L)$.

\end{proof}

From this statement, let us prove Corollary \ref{cor : du principal 3}.
\begin{proof}
Let $k \ge 1$. We want to show the existence of a bound $B_n^k < \infty$ such that for all hypersurfaces of revolution $(M, g)$ of dimension $n \ge 3$, we have $\sigma_k(M,g) < B_n^k$.  A way to do it is the analyse to function $ L \longmapsto B_n^k(L)$, and defining $B_n^k$ as follows:
\begin{align*}
    B_n^k := \sup \{B_n^k(L) : L \in \R_+^* \}.
\end{align*}
By construction, we have $\sigma_k(M,g) < B_n^k$. We only have to show that $B_n^k$ is finite. Indeed, for all $L \in \R_+^*$, we have
\begin{align*}
    B_n^k(L) < \sigma_{(k)}^N(L).
\end{align*}
Therefore, we have 
\begin{align*}
    B_n^k & = \sup \{B_n^k(L) : L \in \R_+^* \} \\
    & \le \sup \{ \sigma_{(k)}^N(A_{1+L/2}) : L \in \R_+^* \} \\
    & = \lim_{L \to \infty} \sigma_{(k)}^N(A_{1+L/2}) \\
    & = n + k -2.
\end{align*}
Now we can prove that $B_n^k$ is a sharp upper bound, that is for all $\epsilon >0$, there exists a hypersurface of revolution $(M_\epsilon, g_\epsilon)$ such that $\sigma_k(M_\epsilon, g_\epsilon) > B_n^k - \epsilon$. 
\medskip

Let $\epsilon >0$. There exists $L^* \in \R_+^*$ such that $B_n^k - B_n^k(L^*) < \frac{\epsilon}{2}$. We define $M_\epsilon := [0, L^*] \times \S^{n-1}$. Thanks to Theorem \ref{thm : thm principal 3}, there exists a metric of revolution $g_\epsilon$ on $M_\epsilon$ such that 
\begin{align*}
    \sigma_k(M_\epsilon, g_\epsilon) > B_n^k(L^*) - \frac{\epsilon}{2} > B_n^k- \frac{\epsilon}{2}- \frac{\epsilon}{2}= B_n^k-\epsilon.
\end{align*}
\end{proof}

From Theorem \ref{thm : thm principal 3}, let us prove Corollary \ref{cor : deuxieme cor du principal 3}.
\begin{proof}
Let us notice that given $j \in \N$ fixed, we have 
\begin{align*}
    \lim_{L\to 0}\sigma_{(j)}^D(A_{1+L/2}) = \infty \quad \mbox{ and } \quad
    \lim_{L\to 0} \sigma_{(j)}^N(A_{1+L/2}) = 0.
\end{align*}
Let $k \in \N$ and let $\epsilon >0$. Let $L_0 >0$ be small enough so that $\sigma_{(1)}^N(A_{1+L_0/2}) < \ldots < \sigma_{(k)}^N(A_{1+L_0/2}) < \epsilon$. Let $0<L< L_0$. Then the finite set $E=E(n,k,L)$ can be ordered and $\nu_0, \ldots, \nu_{l_0+1}$ are equal respectively to $\sigma_{(1)}^N(A_{1+L/2}), \ldots, \sigma_{(l_0+1)}^N(A_{1+L/2})$, where $l_0+1<k$. 

Hence $\nu_{l_1} \in \{ \sigma_{(1)}^N(A_{1+L/2}), \ldots, \sigma_{(l_0+1)}^N(A_{1+L/2}) \}$ and therefore, writing $M:=[0, L]\times \S^{n-1}$,
\begin{align*}
    \sigma_k(M,g) < \nu_{l_1} \le \max\{ \sigma_{(1)}^N(A_{1+L/2}), \ldots, \sigma_{(l_0+1)}^N(A_{1+L/2}) \} < \epsilon.
\end{align*}
\end{proof}

\section{Question and conjecture} \label{sect : conj}

We now investigate if the sharp upper bound $B_n^k$ is achieved by a finite critical length $L_k$ or an infinite one. In particular, we are interested to know if the infinite critical length happens only finitely often. As the question is complex, we used a program that, given a dimension $n$ and an integer $k$, checks if the eigenvalues with index smaller or equal to $k$ have finite or infinite critical length. The code for this program is given in Appendix \ref{appendix : python codes}.
\medskip

Such a program requires quite some computation time to run, at least if we want to check many eigenvalues.
Here are some considerations used in order to optimize the program.
\begin{defn}
    Given a dimension $n \ge 3$, we say that $k \in \N$ is a \emph{diagnosis eigenvalue} if $k =  \sum_{j=0}^{i-1}2m_j$ for a certain integer $i$.
\end{defn}

This definition is motivated by the following lemma:
\begin{lemma}
Let $n \ge 3$ and $i \in \N$. Let $k := \sum_{j=0}^{i-1}2m_j$ be the $i$th diagnosis eigenvalue. Let us suppose that $k$ has an associated finite critical length. Then, for each $k' \in \{k, \dots, k + 2m_i-1\}$, the eigenvalue $k'$ has an associated finite critical length.
\end{lemma}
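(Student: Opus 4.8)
The plan is to understand how $B_n^k(L)$ varies with $k$ when $L$ is fixed, and to observe that across the block $\{k,\dots,k+2m_i-1\}$ the sharp upper bound picks out the \emph{same} family of annular eigenvalues; only the choice of which one is selected moves up within that family as $k$ increases. Concretely, let $k=\sum_{j=0}^{i-1}2m_j$ be the $i$th diagnosis eigenvalue. First I would compute $l_0$ and the extension-process data for a general $k'$ in the block. Since $\sum_{j=0}^{i-1}m_j \le k' $ and we must check against $\sum_{j=0}^{i-1}m_j + m_i \cdot(\text{something})$, the key observation is that for all $k'$ in this range the value of $l_0$ (hence the set $E=E(n,k',L)$, hence the ordered sequence $(\nu_0,\dots,\nu_{2l_0+1})$ and the multiplicities $(\mu_0,\dots,\mu_{2l_0+1})$) is \emph{independent of $k'$}; only $l_1=l_1(k',L)$, defined as the least $l$ with $\sum_{i'=0}^l \mu_{i'} \ge k'$, changes, and it is nondecreasing in $k'$. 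This gives $B_n^{k}(L) \le B_n^{k'}(L)$ for every $L$ and every $k'$ in the block.

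Next I would use monotonicity to transfer the critical-length property. Taking suprema over $L$ in the pointwise inequality $B_n^{k}(L)\le B_n^{k'}(L)$ gives $B_n^{k}\le B_n^{k'}$. The hypothesis is that $k$ has a finite critical length, i.e.\ $B_n^{k}=B_n^{k}(L_k)$ for some finite $L_k$, and moreover (from \Cref{cor : du principal 3}) $B_n^{k}= \lim_{L\to\infty}B_n^{k}(L)$ would fail — more precisely the assumption ``finite critical length'' means the supremum is \emph{not} attained only in the limit $L\to\infty$. The point to extract is that $\lim_{L\to\infty}B_n^{k}(L) < B_n^{k}$. I would then show $\lim_{L\to\infty}B_n^{k'}(L)=\lim_{L\to\infty}B_n^{k}(L)$ for $k'$ in the block: as $L\to\infty$, the Steklov–Dirichlet eigenvalues $\sigma_{(j)}^D(A_{1+L/2})$ increase to $j+n-2$ and the Steklov–Neumann ones $\sigma_{(j)}^N(A_{1+L/2})$ increase to $j+n-2$ as well, so in the large-$L$ regime the ordered sequence $\nu_\bullet$ and its multiplicities stabilise to a configuration depending only on $n$ and $i$; in that stable configuration $l_1$ is again constant across the whole block $\{k,\dots,k+2m_i-1\}$ because the extra $2m_i$ indices only get absorbed once we pass $\sum_{j=0}^{i}2m_j$. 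Hence $\lim_{L\to\infty}B_n^{k'}(L)=\lim_{L\to\infty}B_n^{k}(L)<B_n^{k}\le B_n^{k'}$, so the supremum defining $B_n^{k'}$ is not achieved at infinity, and since $L\mapsto B_n^{k'}(L)$ is continuous and its sup over $\R_+^*$ is finite with strictly smaller limits at $0$ (by \Cref{cor : deuxieme cor du principal 3}) and at $\infty$, the sup is attained at some finite $L_{k'}$.

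The main obstacle, and the step needing the most care, is the claim that $l_0$, the set $E$, and the multiplicity sequence are genuinely constant across the block — this is really a counting statement about how $k = \sum_{j=0}^{i-1}2m_j$ interacts with the definitions of $l_0$ (max $l$ with $\sum_{j\le l} m_j \le k$) and $l_1$. One has to check that $\sum_{j=0}^{i-1} m_j \le k < \sum_{j=0}^{i} m_j$ fails or holds as needed, and track that the combined list of $l_0{+}1$ Dirichlet values and $l_0{+}1$ Neumann values has total multiplicity $\sum_{j=0}^{l_0} m_j + \sum_{j=1}^{l_0+1} m_j$, which must be $\ge k+2m_i-1$ so that $l_1$ is well defined for every $k'$ in the block without needing to enlarge $l_0$. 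I would verify these inequalities directly from \eqref{frm : multiplicite conj}, possibly splitting into the cases where the $i$th Dirichlet and $i$th Neumann eigenvalues fall on either side of the relevant threshold. A secondary subtlety is continuity and the boundary behaviour of $L\mapsto B_n^{k'}(L)$, but both are already essentially recorded: continuity follows since each $\sigma_{(j)}^{D},\sigma_{(j)}^N$ is continuous in $L$ and $B_n^{k'}(L)$ is a $\min$/selection among finitely many of them with locally constant combinatorics away from crossing values, and the limit-at-$0$ behaviour is exactly \Cref{cor : deuxieme cor du principal 3}.
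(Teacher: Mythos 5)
Your overall strategy (use monotonicity of $B_n^{k}(L)$ in $k$, show the common large-$L$ limit across the block, deduce attainment at a finite $L$) gets to the right conclusion, but the two ``key observations'' you rest on are both false, and this is precisely what the paper's proof is designed to sidestep.

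First, $l_0$ is \emph{not} independent of $k'$ across the block $\{k,\dots,k+2m_i-1\}$. Take $n=3$, $i=5$: then $m_j=2j+1$, $\sum_{j=0}^{l}m_j=(l+1)^2$, $k=2\cdot 25 = 50$ and the block ends at $k+2m_5-1=71$. One gets $l_0(50)=6$ (since $49\le 50 < 64$) but $l_0(71)=7$ (since $64 \le 71 < 81$). So the set $E$ and the list of multiplicities genuinely change within the block. Worse, when $m_i>\sum_{j=0}^{i-1}m_j$ (e.g.\ $i=1,2$ for $n=3$), the total multiplicity carried by $E(n,k,\cdot)$ with the frozen $l_0(k)$ is strictly less than $k+2m_i-1$, so $l_1$ is not even well defined for the top of the block; your ``verify $\ge k+2m_i-1$'' step would fail.

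Second, in the stabilised large-$L$ ordering $\sigma_{(0)}^D<\sigma_{(1)}^N<\sigma_{(1)}^D<\sigma_{(2)}^N<\cdots$, $l_1$ is \emph{not} constant across the block. The cumulative multiplicity through $\nu_{2i-1}=\sigma_{(i)}^N$ is $k-1+m_i$, so $l_1=2i-1$ for $k\le k'\le k+m_i-1$ and jumps to $l_1=2i$ (giving $\nu_{2i}=\sigma_{(i)}^D$) for $k+m_i\le k'\le k+2m_i-1$. Your conclusion that all limits agree with $\lim_{L\to\infty}B_n^k(L)=i+n-2$ is nevertheless true, but only because of the coincidence $\lim_{L\to\infty}\sigma_{(i)}^N(L)=\lim_{L\to\infty}\sigma_{(i)}^D(L)=i+n-2$, not because $l_1$ is frozen. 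The paper's proof handles this by explicitly splitting into the two sub-blocks: for $k\le k'<k+m_i$ it compares $\sigma_{(i)}^N(L^*)$ ($L^*>C$) against $B_n^k(L_k)\le B_n^{k'}(L_k)\le\sup_{L\le C}B_n^{k'}(L)$, using the hypothesis that $L_k\le C$; for $k+m_i\le k'<k+2m_i$ it simply observes that for $L>C$ the bound equals $\sigma_{(i)}^D(L)$, which is decreasing in $L$, so the sup is automatically attained at a finite $L$ (this half of the block does not even need the hypothesis on $k$). You should rewrite your argument along these lines, abandoning the constancy claims and carrying out the case split.
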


\begin{proof}
    Since $k = \sum_{j=0}^{i-1}2m_j$, there exists $C > 0$ such that $B_n^k(L) = \sigma_{(i)}^N(A_{1+L/2})$ for all $L >C$. Moreover, since $L \longmapsto \sigma_{(i)}^N(A_{1+L/2})$ is strictly increasing, then the critical length $L_k$ (which is finite by assumption) satisfies $L_k  \le C$. 
    \medskip
    
    Let us fix $L^* >C$. Then for each $k'$ such that $k \le k' < k + m_i$, we have $B_n^{k'}(L^*) = \sigma_{(i)}^N(A_{1+L^*/2})$. Moreover, we have $B_n^{k'}(L_k) \ge B_n^k(L_k)$. Thus, we have 
   \begin{align*}
       \sigma_{(i)}^N(A_{1+L^*/2}) <  B_n^k(L_k) \le B_n^{k'}(L_k)  \le  \sup_{L \in (0, C]} B_n^{k'}(L),
   \end{align*}
   and $k'$ has a finite critical length. 
    \medskip
    
   Finally, for $k'$ such that
   \begin{align*}
       k+ m_i \le k' < k+2m_i,
   \end{align*}
   we have $B_n^{k'}(L) = \sigma_{(i)}^D(A_{1+L/2})$ for all $L >C$. Moreover, since $L \longmapsto \sigma_{(i)}^D(A_{1+L/2})$ is decreasing, we know that $k'$ has a finite critical length. 
    \medskip
    
   Therefore, for all $k' \in \{k, \dots, k + 2m_i-1\}$, the eigenvalue $k'$ has an associated finite critical length.
   
\end{proof}

\begin{expl}
Let $n=5$. The sequence of diagnosis eigenvalues is
\begin{align*}
    (2, 12, 40, 100, 210, 392, 672, 1'080, 1'650, 2'420, 3'432, 4'732, 6'370, 8'400, 10'880,  \ldots).
\end{align*}
The lemma allows us to state that in dimension $5$, \textit{if} the $14$th diagnosis eigenvalue $k=8'400$ has an associated finite critical length, \textit{then} for each $k' \in \{8'400, \dots, 10'879\}$, the $k'$th eigenvalue has an associated finite critical length.
\end{expl}

Therefore, it is relevant to consider only these diagnosis eigenvalues to save time while checking what could be the answer to our open question.
\medskip

Here are some of the results obtained by our program:
\begin{align*}
    \begin{array}{ccl}
     n & i & \mbox{Result of the program's investigations}  \\
     \hline
     3 & 150 & \mbox{From $k=18$ to $k=45'601$, only finite critical lengths found.}\\
     4 &  40 & \mbox{From $k=408$ to $k=47'641$, only finite critical lengths found.} \\
     5 & 30 & \mbox{From $k=8'400$ to $k=195'423$, only finite critical lengths found.} \\
     6 & 25 & \mbox{From $k=21'112$ to $k=610'973$, only finite critical lengths found.}
\end{array}
\end{align*}
We draw attention to the fact that the program \textit{does not say} that in dimension $3$, the eigenvalue number $17$ has a critical length at infinity, it only says that it has not found any infinite critical length above $17$.
Actually, in dimension $n=3$, we have not found any infinite critical length associated with the eigenvalue $k$, if $k >8$.
\medskip

These numerical results motivate \Cref{conj : critical length}:

\begin{center}
    \textit{Let $n \ge 3$ be an integer. Then there exists $K=K(n) \in \N$ such that for every $k \ge K$, the $k$th eigenvalue has an associated finite critical length.}
\end{center}

If this conjecture were to be proven, it would then be interesting to study the function $$n \longmapsto K_{min}(n),$$ where $K_{min}(n)$ is the smallest integer in the conjecture. From our numerical experiments, it seems that this function, \textit{if it exists}, grows quickly with $n$.

\section{Proof of the conjecture in low dimension} \label{sect : low dimension}

\textbf{Notation.} If the context is clear, we write $\sigma_{(\kappa)}^{D/N}(L)$ instead of $\sigma_{(\kappa)}^{D/N}(A_{1+L/2})$ to streamline the notations.
\medskip

The goal of this section is to prove \Cref{thm : low dimension}. 
The idea of the proof is simple: as explained in previous sections, the bound $B_n^k(L)$ can be obtained by looking at Steklov-Neumann $\sigma^N$ and Steklov-Dirichlet $\sigma^D$ eigenvalues on an annulus. So suppose that for a given $k$, the bound $B_n^k(L)$ is achieved by some $\sigma_{(\kappa)}^N(L)$ for $L$ near 0.
We know the curve $L \mapsto \sigma_{(\kappa)}^N(L)$ will intersect the curves of $\sigma_{(0)}^D(L), \sigma_{(1)}^D(L), \dots, \sigma_{(\kappa-1)}^D(L)$. 
Hence looking at the multiplicities carried by each curve, one can see that for large enough $L$, the bound $B_n^k(L)$ cannot continue to be achieved by $\sigma_{(\kappa)}^N(L)$ and must be achieved by a lower curve.
Applying this reasoning to more curves $\sigma^N(L)$, we will obtain an upper bound on the index of the curve $\sigma^N$ which can be achieved for large $L$ (c.f. Lemma \ref{lem : tech 2}).
This allows us to obtain an upper bound for $B_n^k(L)$ when $L \to \infty$.
Meanwhile, looking at the intersection of $\sigma_{(\kappa)}^N(L)$ and $\sigma_{(0)}^D(L)$, we obtain a lower bound for $B_k^n$ (Lemma \ref{lem : tech 1}).
In dimensions $n = 3$ or $n = 4$, this bound is bigger than the one achieved at infinity allowing us to conclude that we must have a finite critical length. 
\medskip


We recall that the multiplicity $m_k$ of the eigenvalue $\sigma_{(k)}^{N/D}(A_{1+L/2})$ is given by 
\begin{align*}
   m_k &= \frac{(n + k -3)!}{(n-2)! k!} (n+2k - 2) \\
   &= \frac{(k + n - 3)(k + n - 4) \dots (k + 2)(k + 1)}{(n-2)!}(2k + n - 2).
\end{align*}

We first prove the following lemma:

\begin{lemma} \label{lem : tech 1}
    Let $n \in \N$. For $\kappa \in \N$, let $b_\kappa \in \R$ be the value such that
    \begin{align*}
        \sigma_{(\kappa)}^N(A_{1+L/2}) = b_\kappa = \sigma_0^D(A_{1+L/2}).
    \end{align*}
    Let $c_0 \approx 0.83$ be the unique positive solution of the equation $\frac{1+c}{1-c} = e^{2/c}$. 
    Then for any $0< c < c_0$, there exists $\kappa_0(c)$ such that for all $\kappa \geq \kappa_0$, 
    \begin{align*}
        b_\kappa \geq c\kappa.
    \end{align*}
\end{lemma}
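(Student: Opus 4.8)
The plan is to start from the explicit formulas in Propositions \ref{prop : SD conj} and \ref{prop : SN conj}. Writing $R = 1 + L/2$ and $t = R^{2\kappa + n - 2}$, the equation $\sigma_{(\kappa)}^N(A_R) = \sigma_{(0)}^D(A_R)$ becomes, after clearing denominators,
\begin{align*}
    \kappa\,\frac{(\kappa + n - 2)(t - 1)}{\kappa t + \kappa + n - 2} = \frac{(n-2)R^{n-2} \cdot \text{(stuff)} }{R^{n-2} - 1},
\end{align*}
i.e. a relation linking $R$, $\kappa$ and $b_\kappa$. The cleaner route is to observe that $b_\kappa$ is determined implicitly by the single scalar equation obtained by eliminating $R$: set $x = b_\kappa/\kappa$, and show that the defining relation, in the limit $\kappa \to \infty$, converges to $\frac{1+x}{1-x} = e^{2/x}$ — the very equation defining $c_0$. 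So the first key step is to derive this asymptotic reduction carefully: rescale $L$ so that $L \sim 2c/\kappa$ type scalings are the relevant regime (since we expect $b_\kappa \sim c\kappa$ forces $R^{2\kappa} $ to stay bounded and bounded away from $1$, hence $L = O(1/\kappa)$), write $R^{2\kappa + n - 2} = \big(1 + \tfrac{L}{2}\big)^{2\kappa + n - 2} \to e^{\kappa L}$ after the rescaling, and track the error terms.

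**Monotonicity and the comparison.**
Next I would establish that, for fixed $\kappa$, the function $L \mapsto \sigma_{(\kappa)}^N(A_{1+L/2})$ is strictly increasing from $0$ to $\kappa + n - 2$ while $L \mapsto \sigma_{(0)}^D(A_{1+L/2})$ is strictly decreasing from $+\infty$ to $n - 2$; since $\kappa \ge 1$ these two monotone curves cross exactly once, which legitimizes the definition of $b_\kappa$ and shows $b_\kappa$ is well-defined and lies in $(n-2, \kappa + n - 2)$. Then the main estimate: I want $b_\kappa \ge c\kappa$ for all large $\kappa$, given $c < c_0$. Because $\sigma_{(\kappa)}^N$ is increasing in $L$ and $\sigma_{(0)}^D$ decreasing, $b_\kappa \ge c\kappa$ is equivalent to checking that at the particular $L = L_\kappa$ where $\sigma_{(\kappa)}^N(A_{1+L/2}) = c\kappa$, one has $\sigma_{(0)}^D(A_{1+L_\kappa/2}) \le c\kappa$ — i.e. the Dirichlet curve has already dropped below $c\kappa$ by the time the Neumann curve reaches $c\kappa$. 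Solving $\sigma_{(\kappa)}^N = c\kappa$ for $t = R^{2\kappa+n-2}$ gives $t = \frac{(\kappa + n - 2 - c)\kappa + \ldots}{\ldots}$, a rational expression in $\kappa$ converging (as $\kappa \to \infty$) to $\frac{1+c}{1-c}$; hence $R^{2\kappa} \to \frac{1+c}{1-c}$, so $\kappa L_\kappa \to \log\frac{1+c}{1-c}$ and $R \to 1$, $L_\kappa \to 0$. Plugging this into $\sigma_{(0)}^D(A_R) = \frac{(n-2)R^{n-2} + 0}{R^{n-2}-1}$ and using $R^{n-2} - 1 \sim (n-2)L_\kappa/2 \sim (n-2)\log\frac{1+c}{1-c}/(2\kappa)$, one gets $\sigma_{(0)}^D(A_{1+L_\kappa/2}) \sim \frac{2\kappa}{\log\frac{1+c}{1-c}}$. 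So the desired inequality becomes, in the limit, $\frac{2}{\log\frac{1+c}{1-c}} \le c$, i.e. $\log\frac{1+c}{1-c} \ge \frac{2}{c}$, i.e. $\frac{1+c}{1-c} \ge e^{2/c}$; and since $c \mapsto \frac{1+c}{1-c} - e^{2/c}$ is increasing and vanishes at $c_0$ (with the correct sign for $c < c_0$ after checking which side is which — in fact $\frac{1+c}{1-c} < e^{2/c}$ for $c < c_0$, so one must be careful about which direction of inequality we actually want, and I would double-check the monotonicity directions here), for $c < c_0$ the limiting inequality holds with room to spare, so for all $\kappa \ge \kappa_0(c)$ the honest (non-asymptotic) inequality follows from absorbing the $O(1/\kappa)$ errors.

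**Organizing the error analysis.**
Concretely, I would fix $c < c_0$, pick $c'$ with $c < c' < c_0$, prove the limiting inequality with $c'$ giving a strict gap $\delta > 0$, then show all the approximations ($R^{2\kappa+n-2} \to e^{\kappa L}$, the rational-function limits in $\kappa$, $R^{n-2} - 1 \sim (n-2)(R-1)$) carry errors that are $O(1/\kappa)$ uniformly on the relevant $L$-range, hence eventually smaller than $\delta$; this yields $b_\kappa \ge c\kappa$ for $\kappa \ge \kappa_0$.

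**Main obstacle.**
The hard part will be making the uniformity of the error estimates rigorous: one is taking $\kappa \to \infty$ while simultaneously $L = L_\kappa \to 0$ with $\kappa L_\kappa$ bounded, so the approximation $R^{2\kappa + n-2} \approx e^{\kappa L}$ must be controlled with explicit remainder bounds valid in exactly that coupled regime, and likewise the passage from the exact rational expressions in Propositions \ref{prop : SD conj}–\ref{prop : SN conj} to their $\kappa \to \infty$ limits needs care because numerator and denominator both blow up. A secondary subtlety is pinning down the direction of the inequality $\frac{1+c}{1-c}$ vs. $e^{2/c}$ for $c < c_0$ and confirming it is the one that makes the argument close; getting this sign wrong would invert the conclusion, so I would verify it numerically at, say, $c = 0.5$ as a sanity check before writing the formal argument.
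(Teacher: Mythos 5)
Your approach is in essence the same as the paper's: exploit the monotonicity of $L \mapsto \sigma_{(\kappa)}^N$ (increasing) and $L \mapsto \sigma_{(0)}^D$ (decreasing) to reduce the claim $b_\kappa \ge c\kappa$ to comparing the two curves at a single well-chosen point, then feed the explicit rational formulas from Propositions~\ref{prop : SD conj}--\ref{prop : SN conj} into a $\kappa\to\infty$ limit, which lands exactly on the defining equation of $c_0$. The paper packages the reduction by writing down the explicit inverse functions $(\sigma^D_0)^{-1}(y)$ and $(\sigma^N_{(\kappa)})^{-1}(y)$ and comparing them at $y=c\kappa$; you instead compare the two curves at the $L$-value $L_\kappa$ where the Neumann curve equals $c\kappa$. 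These are equivalent viewpoints, and your asymptotic computation ($R^{2\kappa+n-2}\to\tfrac{1+c}{1-c}$, hence $\kappa L_\kappa \to \log\tfrac{1+c}{1-c}$ and $\sigma_{(0)}^D(A_{1+L_\kappa/2}) \sim 2\kappa/\log\tfrac{1+c}{1-c}$) is correct and identical in content to the paper's.

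However, your reduction step has the inequality reversed, and you noticed the resulting mismatch but did not resolve it. Since $\sigma_{(\kappa)}^N$ is increasing and $\sigma_{(0)}^D$ is decreasing, with crossing at $L^*$ and value $b_\kappa$ there, one has $b_\kappa \ge c\kappa \iff L_\kappa \le L^* \iff \sigma_{(0)}^D(A_{1+L_\kappa/2}) \ge c\kappa$ --- for $L$ before the crossing, the Dirichlet curve is still \emph{above} the crossing value, not below. Your proposal asserts the opposite direction ($\sigma_{(0)}^D(A_{1+L_\kappa/2}) \le c\kappa$), which upon taking limits would require $\tfrac{1+c}{1-c} \ge e^{2/c}$; this fails for $c<c_0$, and you correctly observed that in fact $\tfrac{1+c}{1-c} < e^{2/c}$ there. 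With the sign of the reduction corrected, your own asymptotic lands on the inequality $\tfrac{1+c}{1-c} \le e^{2/c}$, which is precisely what holds for $0<c<c_0$, and the argument then closes exactly as in the paper. So the gap is not in the asymptotics or the strategy but purely in a monotonicity bookkeeping error that flips the conclusion; writing the inverses explicitly, as the paper does, is a useful device precisely because it makes that direction unambiguous.
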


\begin{proof}
    Since $\sigma_0^D$ and $\sigma_{(\kappa)}^N$ are monotone in $L$, we can invert them and give an expression for $(\sigma_0^D)^{-1}$ and $(\sigma_{(\kappa)}^N)^{-1}$. We get
    \begin{align*}
        (\sigma^D_0)^{-1}(y) = \left(1 + \frac{n-2}{y - (n-2)}\right)^{\frac{1}{n-2}} 
    \end{align*}
    and
    \begin{align*}
        (\sigma^N_{(\kappa)})^{-1}(y) = \left(\frac{1 + \frac{y}{\kappa}}{1 - \frac{y}{\kappa + n - 2}}\right)^{\frac{1}{2\kappa + n -2}}.
    \end{align*}
    With the notation $y_\kappa = c \kappa$, we have to show that 
    \begin{align*}
        (\sigma^N_{(\kappa)})^{-1}(y_\kappa) \leq (\sigma^D_0)^{-1}(y_\kappa)
    \end{align*}
    if $\kappa$ is large enough. Here we use that $\sigma^N(\kappa)$ is increasing and $\sigma^D_0$ is decreasing. 
    \medskip
    
    Therefore, we need to show that for large enough $\kappa$, we have
    \begin{align*}
        \frac{1 + \frac{y_\kappa}{\kappa}}{1 - \frac{y_\kappa}{\kappa + n -2}} \leq \left(1+ \frac{n - 2}{y_\kappa - (n - 2)}\right)^{\frac{2\kappa + n - 2}{n - 2}}.
    \end{align*}

    We have for the left-hand-side
    \begin{align*}
        \frac{1 + \frac{c\kappa}{\kappa}}{1 - \frac{c\kappa}{\kappa + n - 2}} = \frac{1 + c}{1-c} + \mathcal{O}(\frac{1}{\kappa}),
    \end{align*}
    and for the right-hand-side
    \begin{align*}
        \left(1 + \frac{n - 2}{c\kappa - (n - 2)}\right)^{\frac{2\kappa + n - 2}{n - 2}}
        &= e^{\frac{2\kappa + n - 2}{n - 2} \log\left(1 + \frac{n-2}{c\kappa - (n - 2)}\right)} \\
        &= e^{\frac{2\kappa + n - 2}{n - 2} \left(\frac{n - 2}{c\kappa - (n - 2)} + \mathcal{O}(\frac{1}{\kappa^2})\right)} \\
        &= e^{\frac{2}{c} + \mathcal{O}(\frac{1}{\kappa})} \\
        &= e^{\frac{2}{c}} + \mathcal{O}(\frac{1}{\kappa}).
    \end{align*}
    Therefore,  we have 
    \begin{align*}
         \lim_{\kappa \to \infty} \frac{1 + \frac{c\kappa}{\kappa}}{1 - \frac{c\kappa}{\kappa + n - 2}} = \frac{1+c}{1-c} < e^{\frac{2}{c}} = \lim_{\kappa \to \infty} \left(1 + \frac{n - 2}{c\kappa - (n - 2)}\right)^{\frac{2\kappa + n - 2}{n - 2}},
    \end{align*}
    since we assumed $0 < c < c_0$.
    
\end{proof}

\begin{rem}
    This lemma holds whatever the dimension $n \ge 3$. Moreover, the value of $c_0 \approx 0.83$ is independent of the dimension.
\end{rem}

\begin{lemma} \label{lem : tech 2}
    Let $c_0 \approx 0.83$ be the unique positive solution of the equation $\frac{1+c}{1-c} = e^{2/c}$. For $n = 3$ or $n = 4$, there exists $\kappa_1 \in \mathbb{N}$ such that  for all $\kappa \geq \kappa_1$,
    \begin{align*}
        m_\kappa + m_{\kappa-1} + \dots + m_{\lfloor c_0\kappa \rfloor} < m_0 + m_1 + \dots + m_{\lfloor c_0 \kappa \rfloor - 1}.
    \end{align*}
\end{lemma}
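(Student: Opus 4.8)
The plan is to push everything through explicitly, using that in dimensions $n=3$ and $n=4$ the multiplicities are elementary. From $m_k = \frac{(n+k-3)!}{(n-2)!\,k!}(n+2k-2)$ one reads off $m_k = 2k+1$ when $n=3$ and $m_k = (k+1)^2$ when $n=4$, so the partial sums collapse: $\sum_{j=0}^{l}m_j = (l+1)^2$ for $n=3$, and $\sum_{j=0}^{l}m_j = \frac{(l+1)(l+2)(2l+3)}{6}$ for $n=4$. Writing $a := \lfloor c_0\kappa\rfloor$, the inequality to be proven is $\sum_{j=a}^{\kappa}m_j < \sum_{j=0}^{a-1}m_j$, equivalently $\sum_{j=0}^{\kappa}m_j < 2\sum_{j=0}^{a-1}m_j$. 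Substituting the closed forms, this becomes
\begin{align*}
    (\kappa+1)^2 < 2a^2
\end{align*}
for $n=3$, and
\begin{align*}
    (\kappa+1)(\kappa+2)(2\kappa+3) < 2\,a(a+1)(2a+1)
\end{align*}
for $n=4$.

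Next I would isolate the one arithmetic input the argument needs, namely $c_0 > 2^{-1/3}$ (which a fortiori gives $c_0 > 2^{-1/2}$). On $(0,1)$ the function $c\mapsto\frac{1+c}{1-c}$ is strictly increasing and $c\mapsto e^{2/c}$ is strictly decreasing, so $\frac{1+c}{1-c}-e^{2/c}$ changes sign exactly once, at $c_0$, from negative to positive. Evaluating at $c = 2^{-1/3}$ gives $\frac{1+2^{-1/3}}{1-2^{-1/3}} \approx 8.69 < 12.43 \approx e^{2^{4/3}}$, so the difference is still negative there; hence $c_0 > 2^{-1/3}$.

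Finally I would close both displayed inequalities by a leading-order estimate in $\kappa$. Since $a = \lfloor c_0\kappa\rfloor \ge c_0\kappa - 1$, it suffices to verify $(\kappa+1)^2 < 2(c_0\kappa-1)^2$ for $n=3$ and $(\kappa+1)(\kappa+2)(2\kappa+3) < 2(c_0\kappa-1)(c_0\kappa)(2c_0\kappa-1)$ for $n=4$; in each case the difference (right side minus left side) is a polynomial in $\kappa$ whose leading coefficient, $2c_0^2 - 1$ respectively $4c_0^3 - 2$, is strictly positive by the previous paragraph, so the difference is eventually positive and one may take $\kappa_1$ to be any integer exceeding its largest real root. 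The only genuine obstacle is that numerical comparison: the method really requires $c_0 > 2^{-1/3} \approx 0.7937$, and with $c_0 \approx 0.83$ the margin is small — indeed the same reduction in general dimension would demand $c_0 > 2^{-1/(n-1)}$, which already fails at $n = 5$ since $2^{-1/4} \approx 0.841 > c_0$, and this is precisely why the lemma is restricted to $n \in \{3,4\}$.
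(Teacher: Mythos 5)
Your proof is correct and follows essentially the same route as the paper's: reduce via the closed forms $\sum_{j=0}^{l}m_j = (l+1)^2$ (resp.\ $\tfrac{(l+1)(l+2)(2l+3)}{6}$) to the numerical condition $c_0 > 2^{-1/(n-1)}$, then conclude by leading-order comparison in $\kappa$. The only difference is cosmetic — you compare a polynomial difference instead of a ratio limit — and your explicit verification that $c_0 > 2^{-1/3}$ via monotonicity of $\frac{1+c}{1-c} - e^{2/c}$ is a small welcome addition the paper leaves implicit.
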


\begin{proof}
    Let us suppose  $n=3$. Then we have $m_i = 2i+1$, therefore
    \begin{align*}
        m_0 + m_1 + \ldots + m_\kappa = 1 + 3 + \ldots + (2\kappa +1) = (\kappa+1)^2,
    \end{align*}
    and
    \begin{align*}
        m_0 + m_1 + \ldots + m_{\lfloor c_0\kappa \rfloor -1} = \lfloor c_0\kappa \rfloor^2.
    \end{align*}
    Hence, we have
    \begin{align*}
        \frac{ m_0 + \ldots + m_\kappa }{ m_0 + m_1 + \ldots + m_{\lfloor c_0\kappa \rfloor -1}} = \frac{(\kappa+1)^2}{ \lfloor c_0\kappa \rfloor^2} \underset{\kappa \to \infty}{\longrightarrow} \frac{1}{c_0^2} < 2.
    \end{align*}
    Therefore, there exists $\kappa_1 \in \N$ such that for all $\kappa \ge \kappa_1$, we have 
    \begin{align*}
        m_\kappa + m_{\kappa-1} + \dots + m_{\lfloor c_0\kappa \rfloor} < m_0 + m_1 + \dots + m_{\lfloor c_0 \kappa \rfloor - 1}.
    \end{align*}
    
    Let us suppose $n=4$. Then we have $m_i = (i+1)^2$, therefore 
    \begin{align*}
        m_0 + m_1 + \ldots + m_\kappa = \frac{(\kappa+1)(\kappa+2)(2\kappa+3)}{6}
    \end{align*}
    and 
    \begin{align*}
         m_0 + m_1 + \ldots + m_{\lfloor c_0\kappa \rfloor -1} = \frac{\lfloor c_0\kappa \rfloor (\lfloor c_0\kappa \rfloor+1) (2\lfloor c_0\kappa \rfloor+1)}{6}.
    \end{align*}
    Hence, we have
    \begin{align*}
        \frac{ m_0 + m_1 + \ldots + m_\kappa }{ m_0 + m_1 + \ldots + m_{\lfloor c_0\kappa \rfloor -1}}  \underset{\kappa \to \infty}{\longrightarrow} \frac{1}{c_0^3} < 2.
    \end{align*}
    Therefore, there exists $\kappa_1 \in \N$ such that for all $\kappa \ge \kappa_1$, we have 
    \begin{align*}
        m_K + m_{\kappa-1} + \dots + m_{\lfloor c_0\kappa \rfloor} < m_0 + m_1 + \dots + m_{\lfloor c_0 \kappa \rfloor - 1}.
    \end{align*}
\end{proof}

\begin{rem}
    In dimension $n \geq 5$, the previous lemma does not hold. Indeed, one can see that $m_i$ is polynomial of degree $(n-2)$, $m_i = \frac{2i^{n-2}}{(n-2)!} + \mathcal{O}(i^{n-3})$. Hence, if we search for the best $c$ such that the lemma hold, we must solve for $c$ and in the limit $\kappa \to \infty$, the equation
    \begin{align*}
        \frac{2 \kappa^{n-1}}{(n-1)(n-2)!} + \mathcal{O}(\kappa^{n-2}) = \sum_{i = 0}^\kappa m_i = \sum_{i = 0}^{c\kappa} m_i = \frac{2 (c\kappa)^{n-1}}{(n-1)(n-2)!} + \mathcal{O}(\kappa^{n-2}).
    \end{align*}
    This gives $c = \left(\frac{1}{2}\right)^{\frac{1}{n}}$. As $n \to \infty$, $c \to 1$ and in fact for $n \geq 5$, $c > c_0$. 
    
\end{rem}


\begin{lemma}
    For $n \in \{3, 4\}$, there exists a $K_3 \in \mathbb{N}$ such that for any $k \geq K_3$, we have 
    \begin{align*}
        B_n^k = \sup_{L \in (0, \infty)} \{B_n^k(L) \} > \lim_{L \to \infty} B_n^k(L).
    \end{align*}
\end{lemma}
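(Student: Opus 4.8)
The plan is to evaluate $B_n^k(L)$ exactly at one well-chosen finite meridian length, to bound $B_n^k(L)$ from above as $L\to\infty$, and then to compare the two estimates by means of \Cref{lem : tech 1,lem : tech 2}. Recall that $B_n^k=\sup_{L>0}B_n^k(L)$ holds by definition (\Cref{cor : du principal 3}), so only the strict inequality $\sup_{L>0}B_n^k(L)>\lim_{L\to\infty}B_n^k(L)$ has to be shown. Fix $k$, let $l_0=l_0(k)$ be the integer produced by the extension process, and set $\kappa:=l_0+1$, so that $\sum_{i=0}^{\kappa-1}m_i\le k<\sum_{i=0}^{\kappa}m_i$ and $\kappa\to\infty$ as $k\to\infty$.

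\textbf{A finite lower bound.} Let $L_\kappa>0$ be the unique meridian length at which the increasing curve $L\mapsto\sigma_{(\kappa)}^N(L)$ meets the decreasing curve $L\mapsto\sigma_{(0)}^D(L)$, and write $b_\kappa:=\sigma_{(\kappa)}^N(L_\kappa)=\sigma_{(0)}^D(L_\kappa)$ as in \Cref{lem : tech 1}. I would run the extension process at $L=L_\kappa$: the set $E$ then consists of $\sigma_{(0)}^D,\dots,\sigma_{(\kappa-1)}^D$ and $\sigma_{(1)}^N,\dots,\sigma_{(\kappa)}^N$; among these, the ones strictly below $b_\kappa$ are exactly $\sigma_{(1)}^N(L_\kappa),\dots,\sigma_{(\kappa-1)}^N(L_\kappa)$ (every $\sigma_{(i)}^D$ is $\ge\sigma_{(0)}^D(L_\kappa)=b_\kappa$ since Dirichlet eigenvalues increase in the index, and $\sigma_{(i)}^N<\sigma_{(\kappa)}^N$ iff $i<\kappa$), with total multiplicity $\sum_{i=1}^{\kappa-1}m_i=\sum_{i=0}^{\kappa-1}m_i-1\le k-1$; adjoining the two curves of value exactly $b_\kappa$, namely $\sigma_{(\kappa)}^N$ and $\sigma_{(0)}^D$, raises the count to $\sum_{i=0}^{\kappa}m_i>k$. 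Hence $B_n^k(L_\kappa)=b_\kappa$, so $B_n^k\ge b_\kappa$, and by \Cref{lem : tech 1}, for every $c<c_0$ there is $\kappa_0(c)$ with $b_\kappa\ge c\kappa$ for $\kappa\ge\kappa_0(c)$.

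\textbf{The value at infinity.} Set $j_k:=\min\{j\ :\ 2\sum_{i=0}^{j}m_i\ge k+1\}$. Since the ratio $\sum_{i=0}^{l_0}m_i/\sum_{i=0}^{l_0+1}m_i\to 1$, we have $2\sum_{i=0}^{l_0}m_i\ge\sum_{i=0}^{l_0+1}m_i\ge k+1$ for $\kappa$ large, whence $j_k\le l_0$. Now for every meridian length $L$, the curves $\sigma_{(0)}^D(L),\dots,\sigma_{(j_k)}^D(L)$ together with $\sigma_{(1)}^N(L),\dots,\sigma_{(j_k)}^N(L)$ all lie at or below $\sigma_{(j_k)}^D(L)$ (using $\sigma_{(i)}^N<\sigma_{(i)}^D$) and, because $j_k\le l_0$, all belong to $E$; their total multiplicity is $2\sum_{i=0}^{j_k}m_i-m_0\ge k$, so the extension process gives $B_n^k(L)\le\sigma_{(j_k)}^D(L)$ for all $L$. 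Letting $L\to\infty$ and using $\sigma_{(j_k)}^D(A_{1+L/2})\to j_k+n-2$ (\Cref{prop : SD conj}) yields $\lim_{L\to\infty}B_n^k(L)\le j_k+n-2$. It remains to bound $j_k$ above. Feeding the closed forms $\sum_{i=0}^{j}m_i=(j+1)^2$ (for $n=3$) and $\sum_{i=0}^{j}m_i=\tfrac{(j+1)(j+2)(2j+3)}{6}$ (for $n=4$) into the inequalities $2\sum_{i=0}^{j_k}m_i\ge k+1$ and $k+1\le\sum_{i=0}^{\kappa}m_i$ gives $j_k\le 2^{-1/(n-1)}\kappa+O(1)$; equivalently, the argument of \Cref{lem : tech 2} shows $\sum_{i=0}^{\kappa}m_i<2\sum_{i=0}^{\lfloor c\kappa\rfloor-1}m_i$ for $\kappa$ large and any fixed $c$ with $2^{-1/(n-1)}<c<1$, which forces $j_k\le\lfloor c\kappa\rfloor-1$.

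\textbf{Comparison.} For $n\in\{3,4\}$ we have $2^{-1/(n-1)}<c_0$ (indeed $2^{-1/2}\approx0.71$ and $2^{-1/3}\approx0.79$, both below $c_0\approx0.83$), so I would fix $c$ with $2^{-1/(n-1)}<c<c_0$ and then $c'$ with $c<c'<c_0$. For $\kappa$ large, $\lim_{L\to\infty}B_n^k(L)\le j_k+n-2\le c\kappa+O(1)$ while $B_n^k\ge b_\kappa\ge c'\kappa$; since $(c'-c)\kappa\to\infty$ this gives $B_n^k\ge c'\kappa>c\kappa+O(1)\ge\lim_{L\to\infty}B_n^k(L)$ once $\kappa$ exceeds some threshold. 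As $\kappa=l_0(k)+1\to\infty$ with $k$, there is $K_3$ such that the conclusion holds for all $k\ge K_3$. I expect the main work to be the combinatorial bookkeeping in the first two steps — tracking exactly which annular eigenvalue curve (and with which multiplicity) realises $B_n^k(L)$ at $L_\kappa$ and as $L\to\infty$ — and the check that $j_k/\kappa\to 2^{-1/(n-1)}$ uniformly in the position of $k$ within $[\sum_{i=0}^{\kappa-1}m_i,\sum_{i=0}^{\kappa}m_i)$; the final comparison then succeeds precisely because $2^{-1/(n-1)}<c_0$ holds only in dimensions $n\le 4$.
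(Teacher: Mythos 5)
Your proof is correct and follows essentially the same strategy as the paper: compare the finite value $b_\kappa$ (via \Cref{lem : tech 1}) against the $L\to\infty$ limit controlled through the multiplicity count of \Cref{lem : tech 2}, using the fact that $2^{-1/(n-1)} < c_0$ for $n\in\{3,4\}$. You are in fact a little more explicit than the paper at two points — you justify $B_n^k(L_\kappa)=b_\kappa$ by a direct multiplicity count (the paper only asserts $B_n^k\ge b_\kappa$), and you bound the tail of $B_n^k(L)$ by $\sigma^D_{(j_k)}(L)$ rather than the paper's $\sigma^N_{(\lfloor 0.8\kappa\rfloor-1)}(L)$ — but since both curves share the limit $j+n-2$, these are presentational rather than substantive differences.
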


\begin{proof}
    Let $n \in \{3, 4\}$. Let $\kappa \ge K_3 := \max\{\kappa_0, \kappa_1\}$, where $\kappa_0$ and $\kappa_1$ are as in the preceding lemmas. Let $k$ be such that 
    \begin{align*}
        m_0 + m_1 + \ldots + m_{\kappa-1} < k \le m_0 + m_1 + \ldots m_\kappa.
    \end{align*}
    Then, for $L$ small enough, we have 
    \begin{align*}
        B_n^k(L) = \sigma_\kappa^N(A_{1+L/2}).
    \end{align*}
    Moreover,
    \begin{align*}
        k \le
        m_0 + \ldots + m_{\lfloor 0.8 \kappa \rfloor - 1} +m_{\lfloor 0.8 \kappa \rfloor} + \ldots + m_\kappa  
        \stackrel{\Cref{lem : tech 2}}{<}
        2 \cdot \left(  m_0 + \ldots + m_{\lfloor 0.8 \kappa \rfloor - 1} \right).
    \end{align*}
    Therefore, for $L$ large enough, we have 
    \begin{align*}
        B_n^k(L) \le \sigma_{\lfloor 0.8 \kappa \rfloor -1}^N(A_{1+L/2}) \underset{L \to \infty}{\longrightarrow} \lfloor 0.8 \kappa \rfloor + n -3,
    \end{align*}
    as shown in \Cref{fig:proof low dim}.
    \begin{figure}[H]
        \centering
        \includegraphics[width=\textwidth]{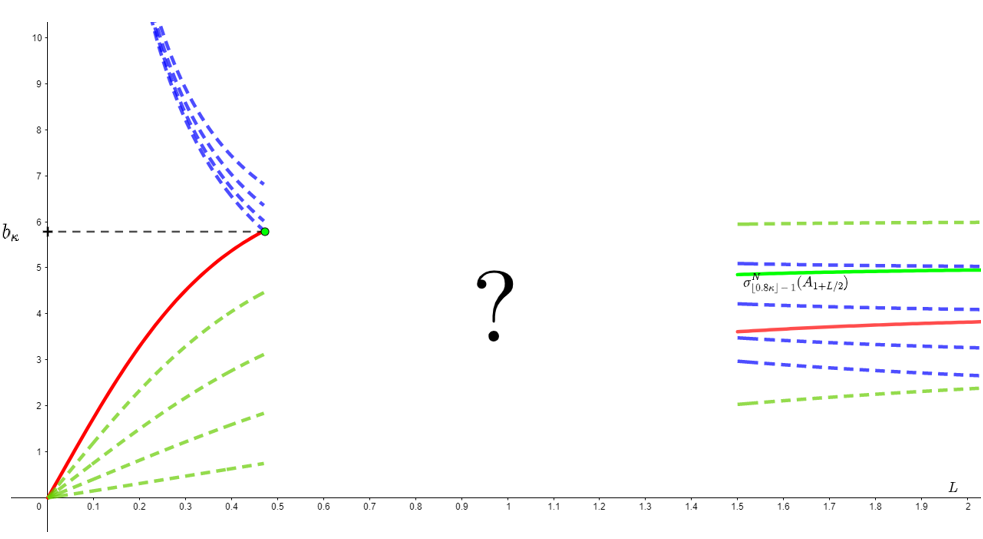}
        \caption{Before the intersections (the region covered by the interrogation mark), the sharp upper bound is given by $\sigma_{(\kappa)}^N(A_{1+L/2})$ and after, an upper bound (non necessarily sharp) is given by $\sigma_{(\lfloor 0.8 \kappa \rfloor -1)}^N(A_{1+L/2})$, which is lower than $b_\kappa$.  }
        \label{fig:proof low dim}
    \end{figure}
    Moreover, \Cref{lem : tech 1} allows us to state that $b_\kappa \ge c_0 \kappa > \lfloor 0.8 \kappa \rfloor + n -3$ because $\kappa$ is large enough. Therefore,
    \begin{align*}
        B_n^k \ge b_\kappa \ge c_0 \kappa > \lfloor 0.8 \kappa \rfloor +n - 3,
    \end{align*}
    and hence 
    \begin{align*}
        B_n^k = B_n^k(L_k) 
    \end{align*}
    for a certain $L_k$ finite, which means that $k$ has a finite critical length.
    
\end{proof}


\textbf{Acknowledgment.} The first author thanks Bruno Colbois and Katie Gittins for organizing the Neuchâtel ``Geometric Spectral Theory" meeting, in which he learned of the problem from Léonard Tschanz. He also acknowledge  support of EPSRC grant EP/T030577/1.
The second author would like to warmly thank his thesis supervisor Bruno Colbois for letting him work on this topic. Moreover, he would like to thank Maxime Welcklen for his help on the use of Python when coding the functions used in the extension process. He would also like to thank Prof. Pascal Felber who greatly improved the computing time of the codes, allowing us to search way further in a decent time, as well as Prof. Katie Gittins for her careful proofreading of a first version of the paper.

\printbibliography

\begin{appendices}

\section{Plotting sharp upper bounds} \label{sect : plot simple}
We can implement the extension process in a computer and let the value of $L$ vary in order to plot the sharp upper bound as a function of $L$, where we see $n$ and $k$ as parameters.

Here are some figures obtained. On the bottom-right part of each graphic, one can see written "$B_n^k$", meaning that we studied the  $n$th dimension and the $k$th eigenvalue. Moreover, for each values of $n$ and $k$, the graphic indicates if we found a finite or infinite critical length, and provides an estimation of the critical length as well as an estimation of  the sharp upper bound $B_n^k$.

 \begin{figure}[H]
    \centering
    \includegraphics[scale=0.455]{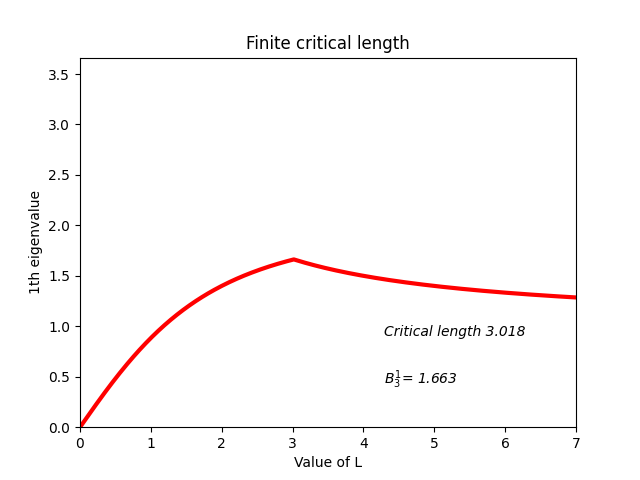}
    \includegraphics[scale=0.455]{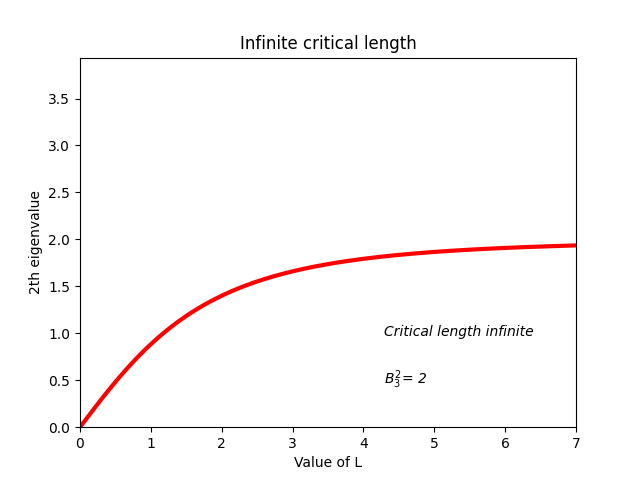}
\end{figure}
\begin{figure}[H]
    \centering
    \includegraphics[scale=0.455]{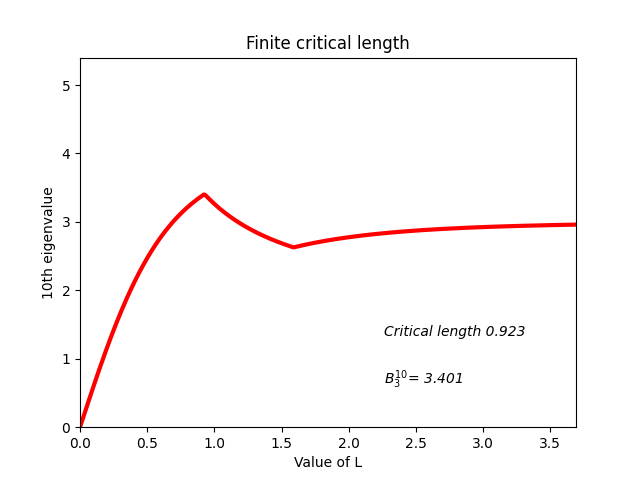}
    \includegraphics[scale=0.45]{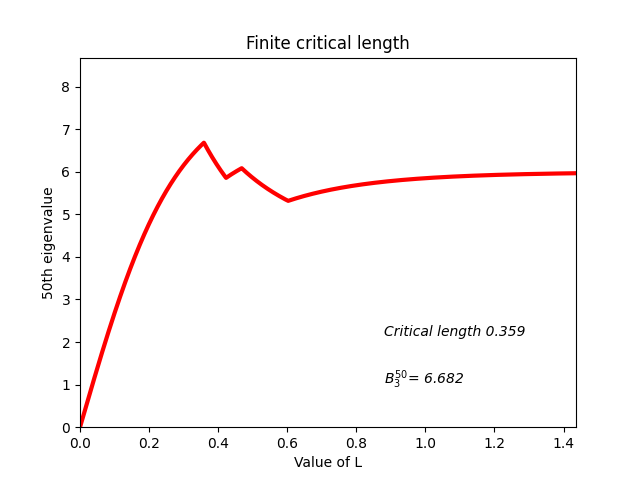}
\end{figure}
\begin{figure}[H]
    \centering
    \includegraphics[scale=0.45]{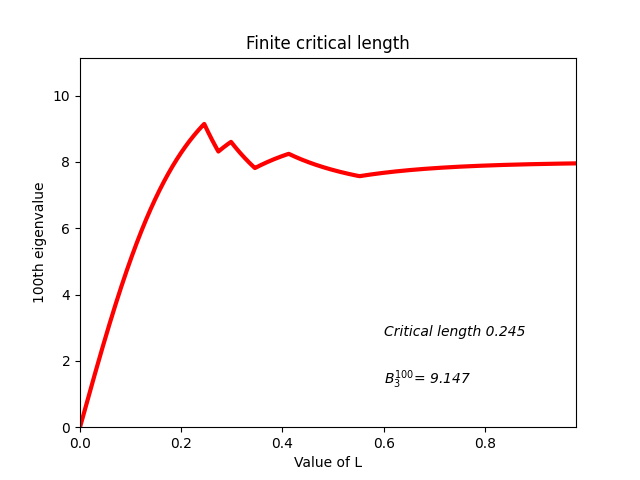}
    \includegraphics[scale=0.45]{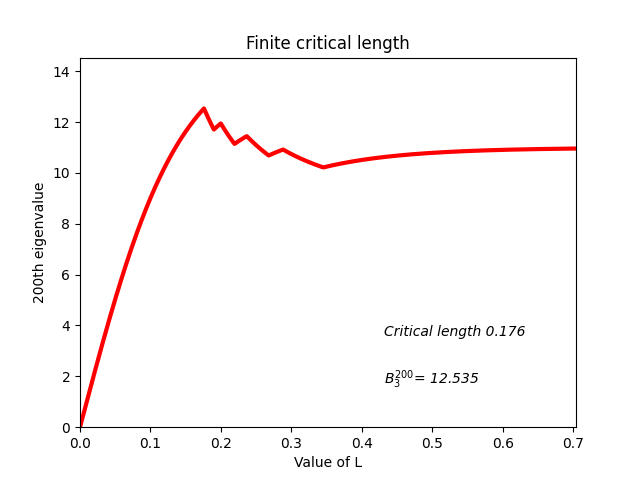}
\end{figure}

\begin{figure}[H]
     \centering
    \includegraphics[scale=0.455]{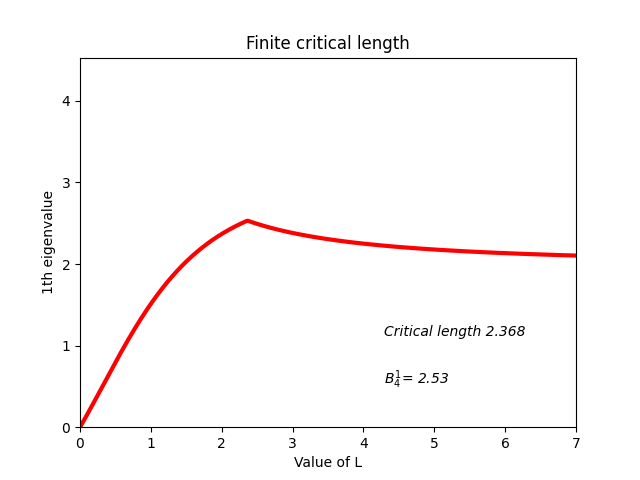}
    \includegraphics[scale=0.455]{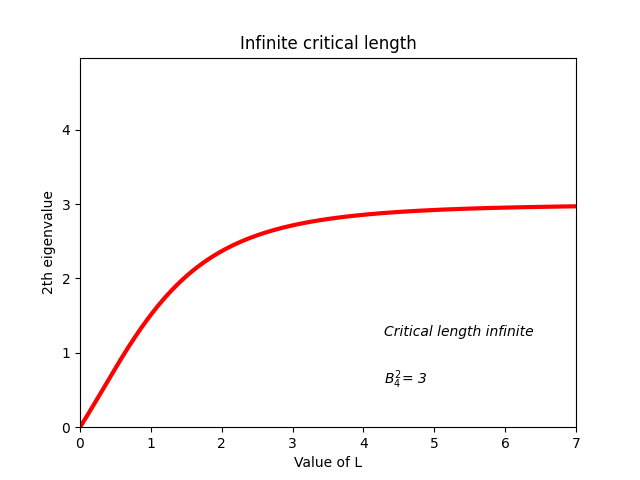}
\end{figure}
\begin{figure}[H]
    \centering
    \includegraphics[scale=0.455]{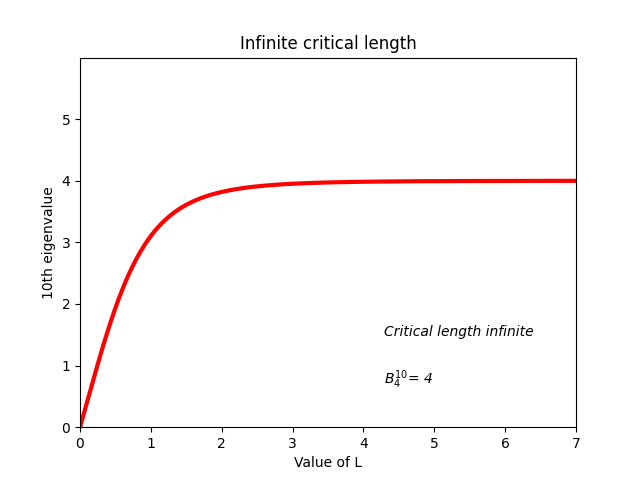}
    \includegraphics[scale=0.45]{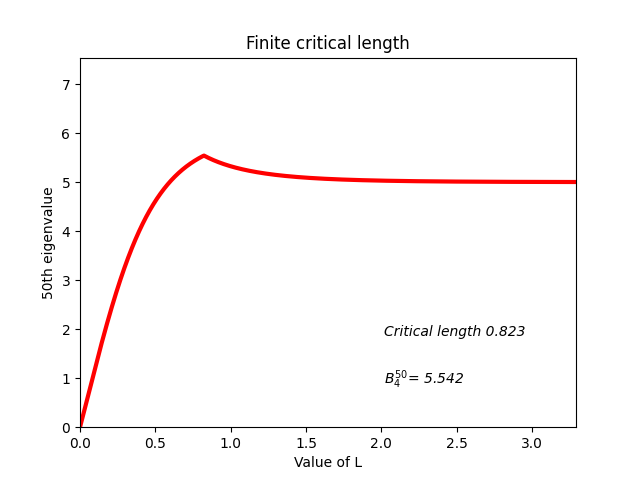}
\end{figure}
\begin{figure}[H]
    \centering
    \includegraphics[scale=0.45]{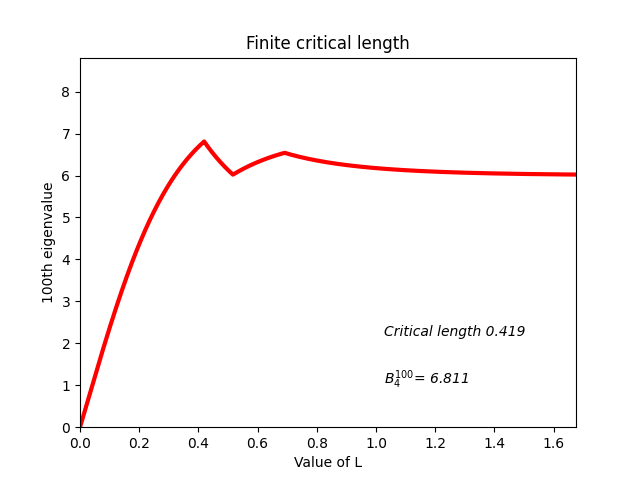}
    \includegraphics[scale=0.45]{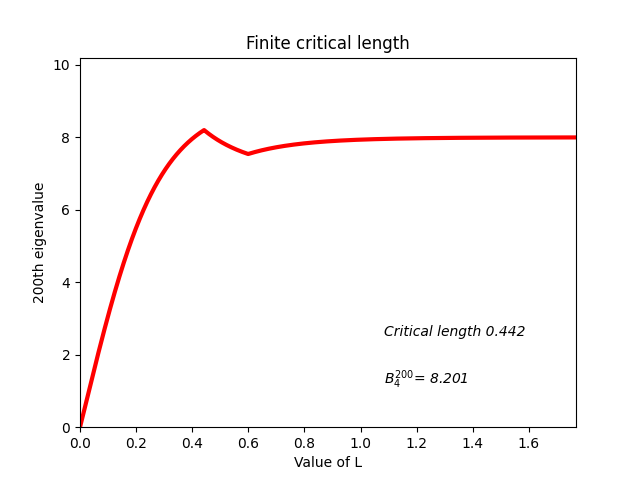}
\end{figure}

\begin{figure}[H]
     \centering
    \includegraphics[scale=0.455]{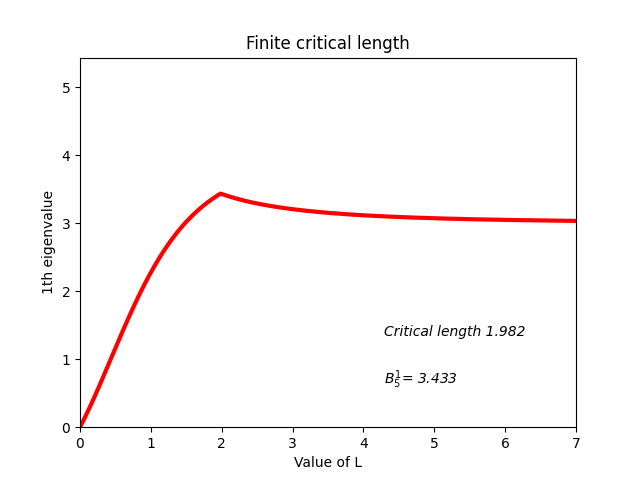}
    \includegraphics[scale=0.455]{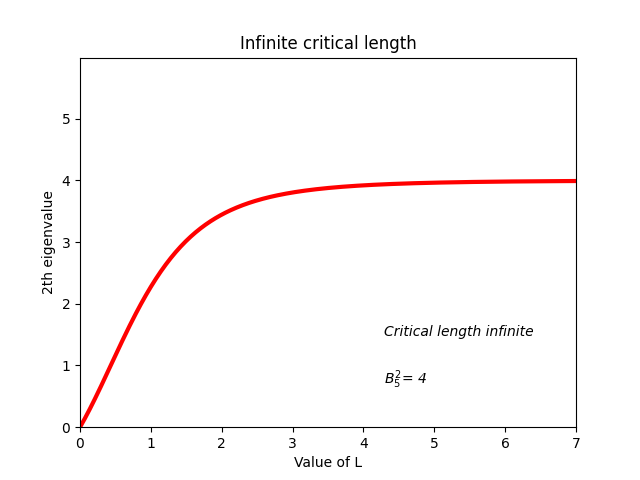}
\end{figure}
\begin{figure}[H]
    \centering
    \includegraphics[scale=0.455]{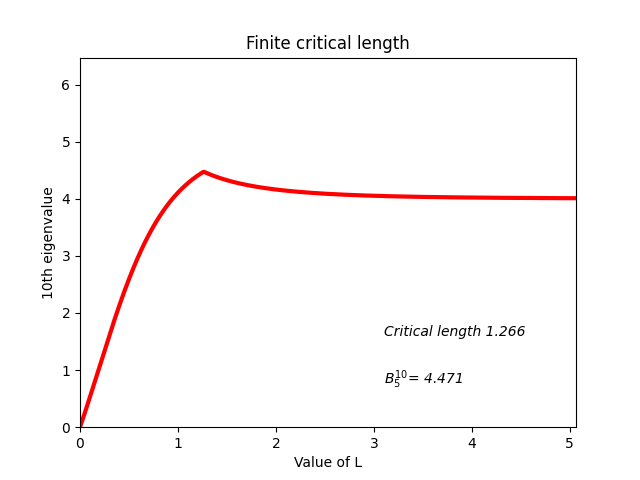}
    \includegraphics[scale=0.45]{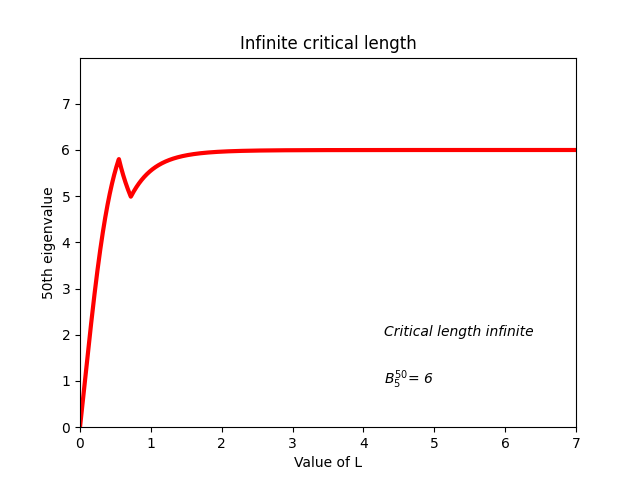}
\end{figure}
\begin{figure}[H]
    \centering
    \includegraphics[scale=0.45]{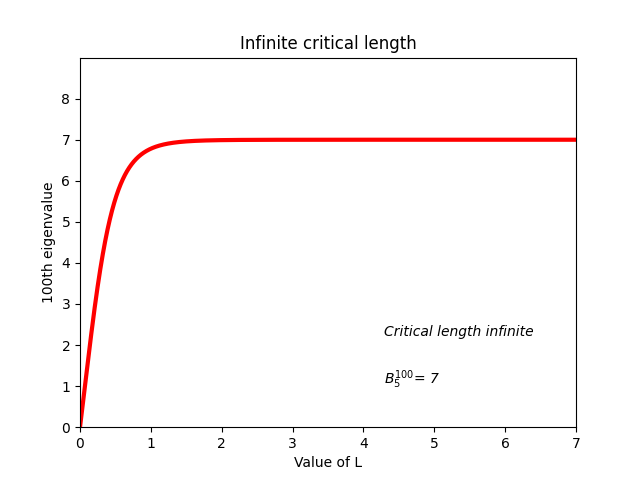}
    \includegraphics[scale=0.45]{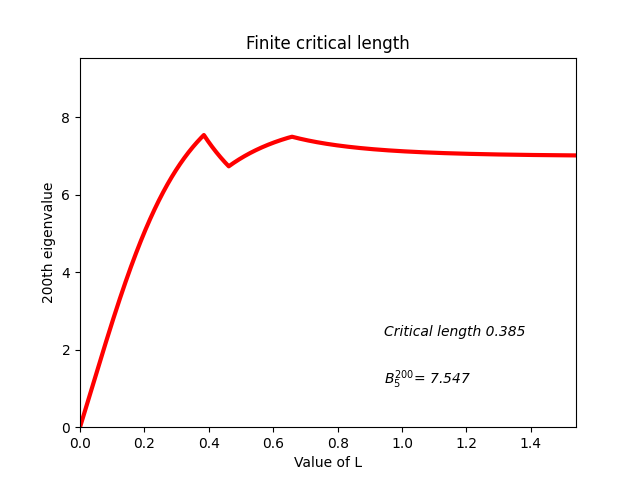}
     \caption[Different sharp upper bounds]{As one can see, the behaviour of the function $L \longmapsto B_n^k(L)$ is hard to predict, and its properties (smoothness, infinite critical length, finite critical length) depend on the values of $n$ and $k$.}
\end{figure}

\section{Adding the mixed eigenvalues} \label{sect : plot mixte}
Since the function $L\longmapsto B_n^k(L)$ comes from the mixed Steklov-Dirichlet and Steklov-Neumann eigenvalues, we can add them in the graphics to understand the function better.

We got the following figures.

\begin{figure}[H]
     \centering
    \includegraphics[scale=0.455]{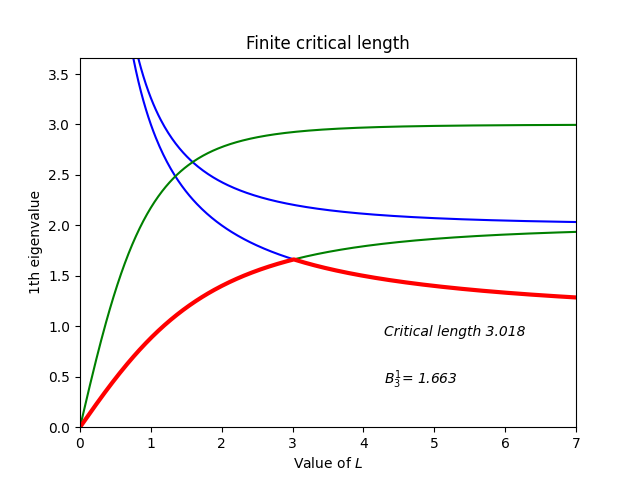}
    \includegraphics[scale=0.455]{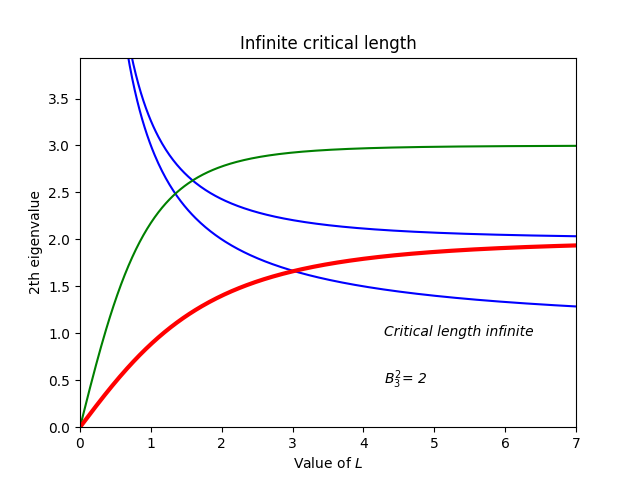}
\end{figure}
\begin{figure}[H]
    \centering
    \includegraphics[scale=0.455]{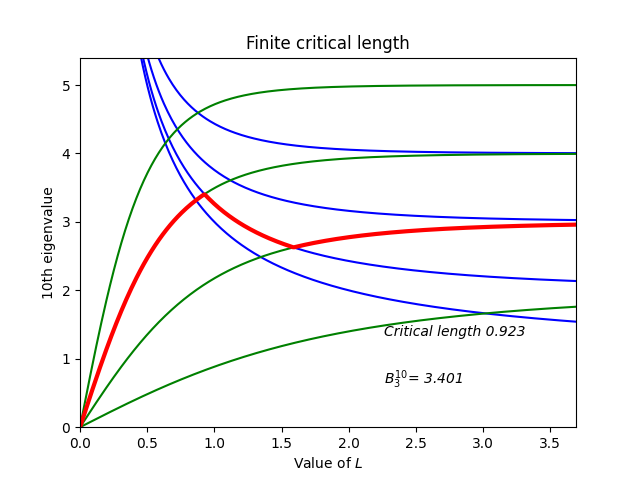}
    \includegraphics[scale=0.45]{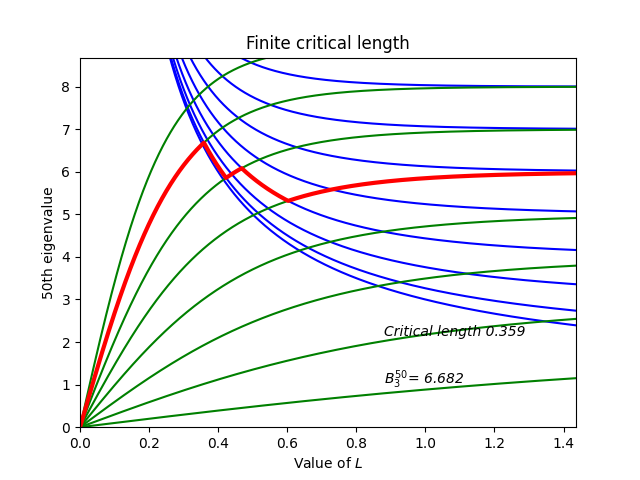}
\end{figure}
\begin{figure}[H]
    \centering
    \includegraphics[scale=0.45]{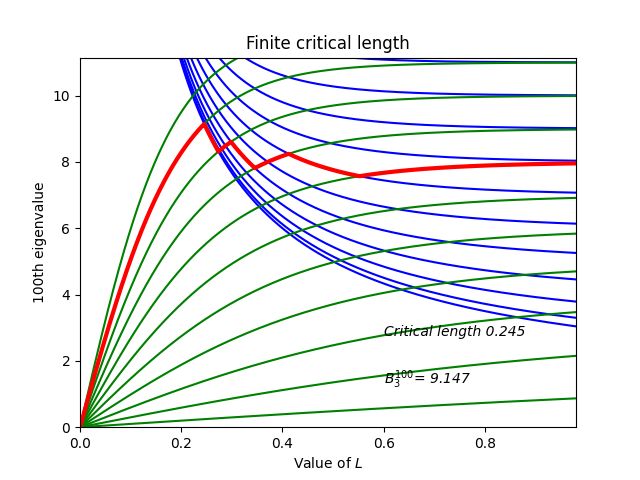}
    \includegraphics[scale=0.45]{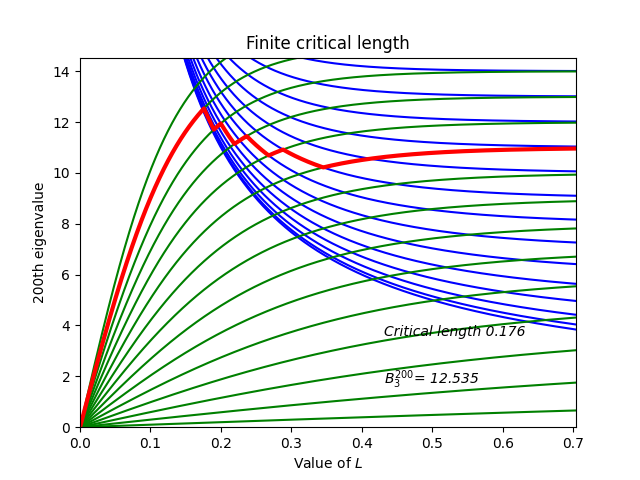}
\end{figure}

\begin{figure}[H]
     \centering
    \includegraphics[scale=0.455]{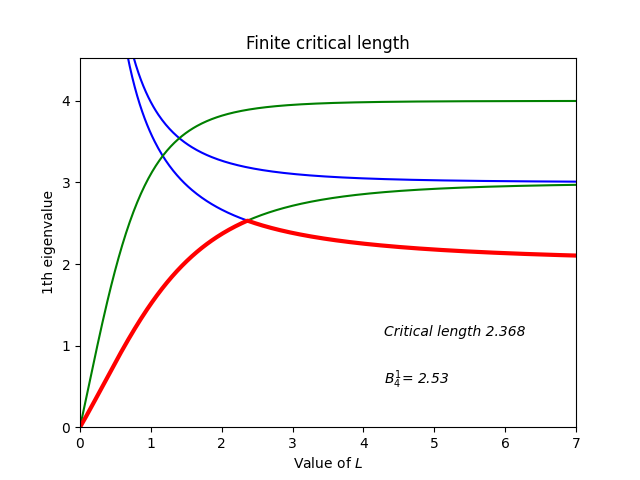}
    \includegraphics[scale=0.455]{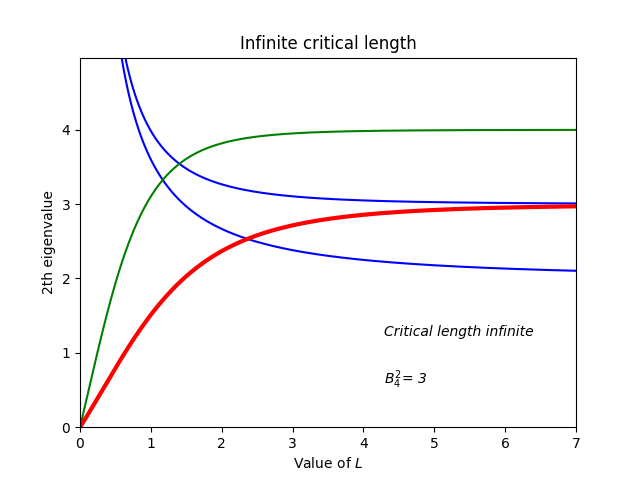}
\end{figure}
\begin{figure}[H]
    \centering
    \includegraphics[scale=0.455]{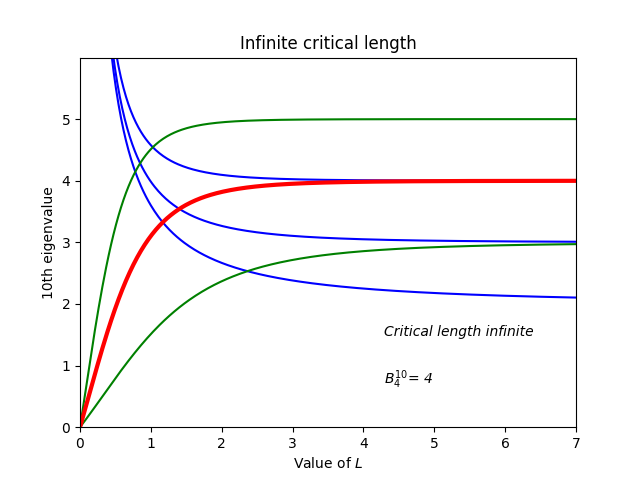}
    \includegraphics[scale=0.45]{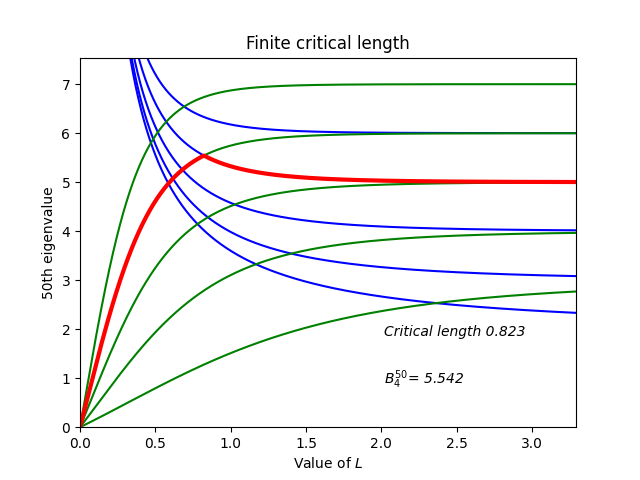}
\end{figure}
\begin{figure}[H]
    \centering
    \includegraphics[scale=0.45]{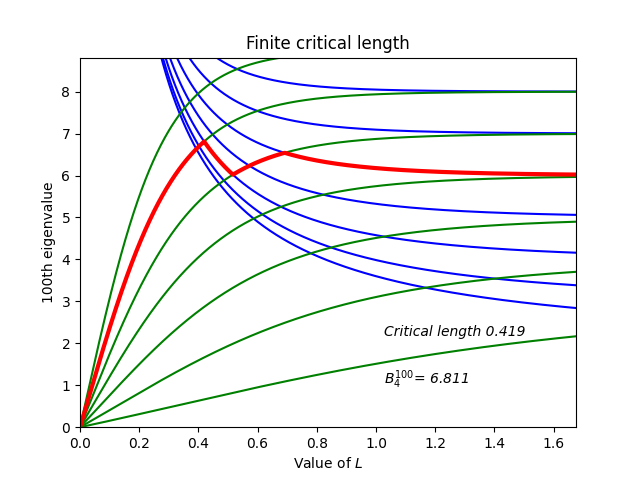}
    \includegraphics[scale=0.45]{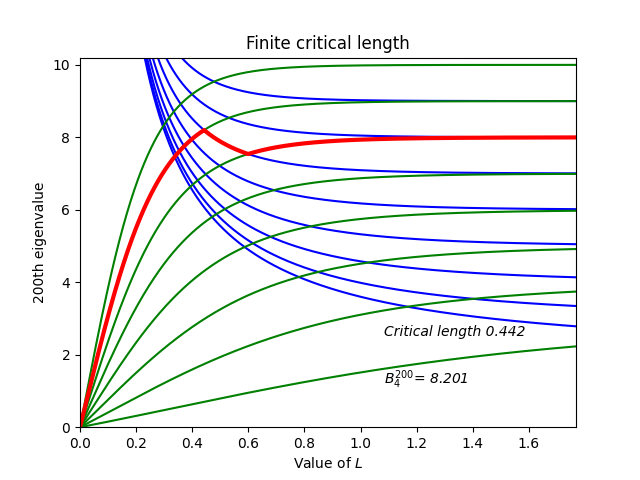}
\end{figure}

\begin{figure}[H]
     \centering
    \includegraphics[scale=0.455]{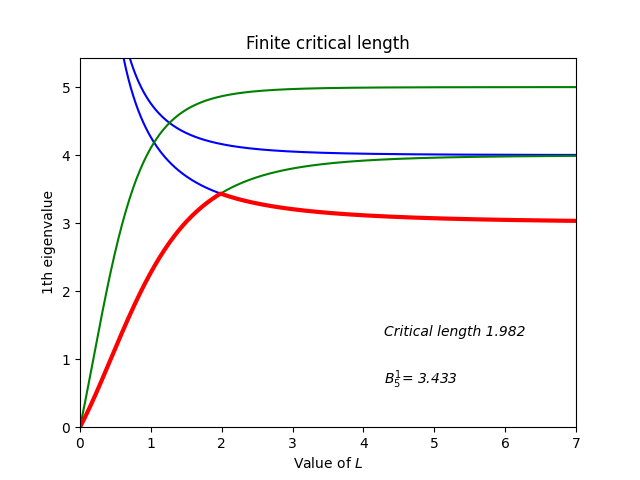}
    \includegraphics[scale=0.455]{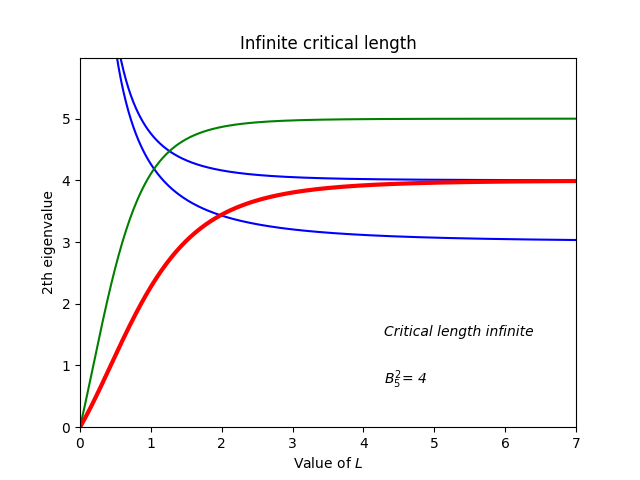}
\end{figure}
\begin{figure}[H]
    \centering
    \includegraphics[scale=0.455]{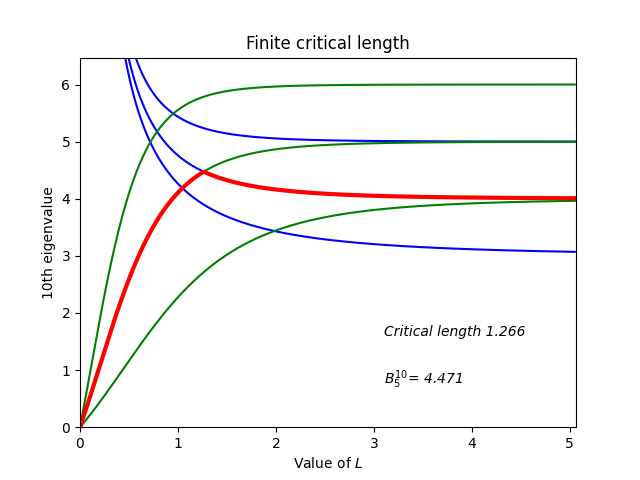}
    \includegraphics[scale=0.45]{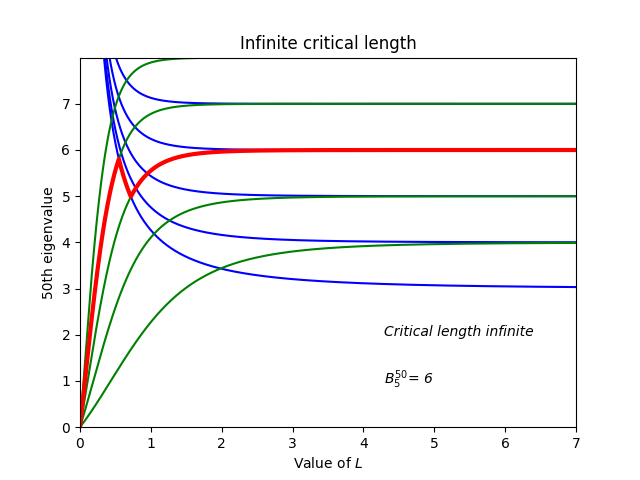}
\end{figure}
\begin{figure}[H]
    \centering
    \includegraphics[scale=0.45]{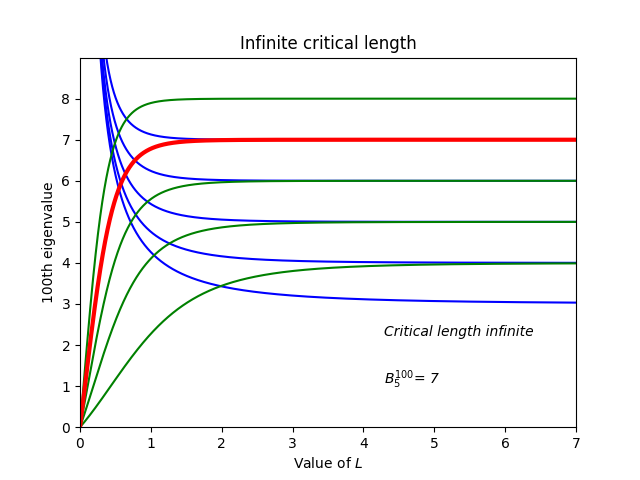}
    \includegraphics[scale=0.45]{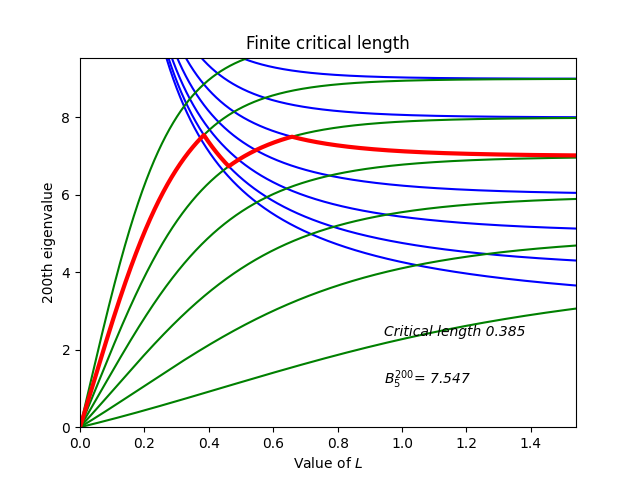}
     \caption[Sharp upper bounds with mixed eigenvalues]{The blue curves are the Steklov-Dirichlet eigenvalues, the green ones are the Steklov-Neumann eigenvalues.}
\end{figure}


\section{Codes} \label{appendix : python codes}
The code that we developed can be found on GitHub, along with other documents such as results, computation times, etc., at the following address: 
\begin{center}
    \href{https://github.com/Tchatchu5}{https://github.com/Tchatchu5}.
\end{center}

For the Python version of the codes, please see
\begin{center}
    \href{https://github.com/Tchatchu5/Python-Extension-process-for-steklov-eigenvalues}{https://github.com/Tchatchu5/Python-Extension-process-for-steklov-eigenvalues}.
\end{center}

For the Julia version of the codes, please see
\begin{center}
    \href{https://github.com/Tchatchu5/Julia-Extension-process-for-Steklov-eigenvalues}{https://github.com/Tchatchu5/Julia-Extension-process-for-Steklov-eigenvalues}.
\end{center}



































































\end{appendices}


%
%

\end{document}